\documentclass[11pt, amsfonts]{amsart}


\usepackage{amsmath,amssymb,amsthm,enumerate,comment}
\usepackage[all]{xy}
\usepackage{color}
\usepackage{graphicx}
\usepackage[T1]{fontenc}
\usepackage[dvipdfm,colorlinks=true]{hyperref}
\usepackage[colorlinks=true]{hyperref}


\textwidth 6in
\oddsidemargin .25in
\evensidemargin .25in
\parskip .02in


\SelectTips{eu}{12}

\theoremstyle{definition}
\newtheorem{theorem}{Theorem}[section]
\newtheorem{definition}[theorem]{\rm Definition}
\newtheorem{lemma}[theorem]{Lemma}
\newtheorem{proposition}[theorem]{Proposition}
\newtheorem{corollary}[theorem]{Corollary}
\newtheorem{example}[theorem]{\rm Example}
\newtheorem{remark}[theorem]{\rm Remark}
\newtheorem{notation}[theorem]{\rm Notation}


\makeatletter
\@addtoreset{equation}{section} 
\makeatother

\DeclareMathOperator{\Homeo}{Homeo}

\DeclareMathOperator{\id}{id}

\DeclareMathOperator{\Hom}{Hom}

\DeclareMathOperator{\grp}{grp}

\DeclareMathOperator{\ev}{ev}

\newcommand{\NN}{\mathbb{N}}
\newcommand{\RR}{\mathbb{R}}
\newcommand{\ZZ}{\mathbb{Z}}

\newcommand{\GG}{\Gamma}
\renewcommand{\gg}{\gamma}

\newcommand{\hG}{\widehat{G}_a}
\newcommand{\tG}{\widetilde{G}}

\newcommand{\hGb}{\widehat{G}_a^b}
\newcommand{\hGm}{\widehat{G}_{a, \mu}}
\newcommand{\hg}{\widehat{g}}
\newcommand{\tg}{\widetilde{g}}
\newcommand{\hh}{\widehat{h}}
\renewcommand{\th}{\widetilde{h}}
\newcommand{\G}{G_a}
\newcommand{\Gm}{G_{a,\mu}}
\newcommand{\Gb}{G_a^b}
\newcommand{\hX}{\widehat{X}_a}
\newcommand{\hx}{\widehat{x}}
\newcommand{\hy}{\widehat{y}}
\newcommand{\hf}{\widehat{f}}
\newcommand{\trot}{\widehat{\mathrm{r}}\mathrm{ot}}
\newcommand{\hgg}{\widehat{\gamma}}
\newcommand{\Z}{Z}
\newcommand{\A}{A}
\newcommand{\W}{W}
\newcommand{\tphi}{\widehat{\phi}}

\newcommand{\HHH}{\mathrm{H}}

\makeatletter
\@namedef{subjclassname@2020}{%
  \textup{2020} Mathematics Subject Classification}
\makeatother

\title[Translation numbers and a cocycle on a group of homeomorphisms]{Translation numbers for bundle automorphisms and a cocycle on a group of homeomorphisms}
\author{Shuhei Maruyama}
\address{Graduate School of Mathematics, Nagoya University, Japan}
\email{m17037h@math.nagoya-u.ac.jp}

\subjclass[2020]{37E45; 37E10}
\keywords{translation number; undistortion elements.}

\begin{document}

\begin{abstract}
  We introduce and study translation numbers for automorphisms of principal $\ZZ$-bundles and flat principal $\RR$-bundles.
  We use them to show a vanishing result of a characteristic class of foliated bundles and to detect undistortion elements in the group of bundle automorphisms.
\end{abstract}

\maketitle


\section{Introduction}\label{sec:intro}

Poincar\'{e}'s translation number is an invariant of one-dimensional dynamical systems, and it has been generalized to other cases,
e.g., to higher dimensional manifolds.
Such generalizations include the asymptotic cycle \cite{MR88720} and the (homological) translation vectors (\cite{MR1053617}, \cite{MR1094554}, \cite{MR1325916}, \cite{MR1444450}), which are vector-valued invariants.
In \cite{MR2854098}, Gal and K\k{e}dra introduced another type of generalizations called the local rotation number (and the corresponding local translation number), which are contrastingly a numerical invariant.
In this paper, we consider the local translation number and introduce a measure theoretic counterpart which should be called the mean translation number.

Let $\A$ be either the group $\ZZ$ of integers or the group $\RR^{\delta}$ of real numbers with the discrete topology.
For a path-connected topological space $X$ and a cohomology class $a \in \HHH^1(X;\A)$, let $\pi \colon \hX \to X$ denote a principal $\A$-bundle corresponding to $a$.
A homeomorphism $\hg \colon \hX \to \hX$ is called a \textit{bundle automorphism} if there exists a homeomorphism $g \colon X \to X$ such that $\pi \circ \hg = g \circ \pi$ holds.
Let $\hG$ denote the group of bundle automorphisms of $\hX \to X$.

For a bundle automorphism $\hg$, we define its local translation number $\trot_{x, \alpha}(\hg)$ and mean translation number $\trot_{\mu, \alpha}(\hg)$ in Section \ref{sec:local_trans_number} (our definition of $\trot_{x, \alpha}(\hg)$ slightly differ from one in \cite{MR2854098}; see Remark \ref{rem:differ_from_GK}).
Here $x$ is a point of $X$, $\mu$ is a $g$-invariant Borel probability measure on $X$, and $\alpha$ is a singular one-cocycle representing $a \in \HHH^1(X;\A)$.
As suggested by their symbols, these translation numbers depend on the choice of $x$, $\mu$, and $\alpha$.
They are given as a limit and an integration, and may not exist in some cases; see Definitions \ref{def:local_t} and \ref{def:mean_t}.
We note that when $X$ is a compact space which admits a universal covering, there is a standard way of taking $\alpha$ (which we call a \textit{standard representative}; see Definition \ref{def:standard} and Lemma \ref{lem:conti_theta}), and we can define translation numbers $\trot_{x}(\hg)$ and $\trot_{\mu}(\hg)$ as $a$-dependent notions;
see Remark \ref{rem:a-dep_trans_number}.

Let $\G = \Homeo(X,a)$ be the group of $a$-preserving homeomorphisms of $X$.
In \cite{MR2854098}, Gal and K\k{e}dra introduced and studied a group two-cocycle $\mathfrak{G}_{x, \alpha}$ on $\G$ with coefficients in $\A$.
As the Poincar\'{e} translation number relates to the Euler class of $\Homeo_+(S^1)$, the local translation number and the mean translation number relate to the cohomology class $[\mathfrak{G}_{x, \alpha}]$ in $\HHH_{\grp}^2(\G;\A)$.
In particular, we obtain the following vanishing result on the class $[\mathfrak{G}_{x, \alpha}]$.
\begin{theorem}\label{thm_appendix}
  Let $K$ be a group and $\phi \colon K \to \G$ a homomorphism.
  If either the local translation number $\trot_{x, \alpha}$ or the mean translation number $\trot_{\mu, \alpha}$ is defined on the preimage $p^{-1}(\phi(K))$ of $\phi(K)$ and is a homomorphism on it, then the cohomology class $\phi^*[\mathfrak{G}_{x, \alpha}]$ is equal to zero in $\HHH_{\grp}^2(K;\RR)$.
\end{theorem}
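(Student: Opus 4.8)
The plan is to use the translation number as an explicit primitive of the pulled-back cocycle, exactly as Poincar\'e's translation number trivializes the real Euler cocycle of $\widetilde{\Homeo}_+(S^1)$. Write $p\colon \hG \to \G$ for the projection sending a bundle automorphism to the homeomorphism it covers, and let $\iota\colon \A \to \RR$ be the natural coefficient homomorphism (the inclusion when $\A = \ZZ$, the identity when $\A = \RR^{\delta}$). I recall from the construction of $\mathfrak{G}_{x,\alpha}$ that it arises from a set-theoretic section $s\colon \G \to \hG$ of $p$ as its multiplicative defect: for all $g,h\in\G$ one has $s(g)\,s(h) = \widehat{\mathfrak{G}_{x,\alpha}(g,h)}\cdot s(gh)$, where for $t\in\A$ the element $\widehat{t}\in\ker p$ denotes the fiber translation coming from the principal $\A$-action. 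I shall also use the normalization of the translation numbers, namely $\trot_{x,\alpha}(\widehat{t}) = \iota(t)$ (and likewise for $\trot_{\mu,\alpha}$), which follows directly from their definitions, since translating every fiber by $t$ shifts the accumulated displacement by $\iota(t)$.

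Denote by $\tau$ whichever of $\trot_{x,\alpha},\trot_{\mu,\alpha}$ is, by hypothesis, defined on the subgroup $p^{-1}(\phi(K))\subseteq\hG$ and a homomorphism there. For $g,h\in\phi(K)$ all of $s(g),s(h),s(gh)$ and $\widehat{\mathfrak{G}_{x,\alpha}(g,h)}$ lie in $p^{-1}(\phi(K))$: the first three because $p\circ s = \id$ and $\phi(K)$ is a subgroup, the last because $\{\widehat{t} : t\in\A\}\subseteq\ker p\subseteq p^{-1}(\phi(K))$. Hence $\tau$ is defined and additive on every product occurring below. Define a $1$-cochain $b\colon K\to\RR$ by $b(k) = \tau\bigl(s(\phi(k))\bigr)$.

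The key computation identifies $\delta b$ with the image of $\phi^{*}\mathfrak{G}_{x,\alpha}$ in $\RR$-coefficients. Writing $g=\phi(k)$ and $h=\phi(l)$ and using, in turn, additivity of $\tau$, the defining relation for $s$, the normalization $\tau(\widehat{t})=\iota(t)$ together with additivity, and naturality of $\phi^{*}$, we get
\begin{align*}
  b(k)+b(l) &= \tau\bigl(s(g)\bigr)+\tau\bigl(s(h)\bigr) = \tau\bigl(s(g)s(h)\bigr) \\
  &= \tau\bigl(\widehat{\mathfrak{G}_{x,\alpha}(g,h)}\cdot s(gh)\bigr) = \iota\bigl(\mathfrak{G}_{x,\alpha}(g,h)\bigr) + \tau\bigl(s(gh)\bigr) \\
  &= \iota\bigl((\phi^{*}\mathfrak{G}_{x,\alpha})(k,l)\bigr) + b(kl).
\end{align*}
Thus $\iota\circ\phi^{*}\mathfrak{G}_{x,\alpha} = \delta b$ is a coboundary, so $\phi^{*}[\mathfrak{G}_{x,\alpha}] = [\iota\circ\phi^{*}\mathfrak{G}_{x,\alpha}] = 0$ in $\HHH^2_{\grp}(K;\RR)$, where the class is understood in $\RR$-coefficients via $\iota$.

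The argument is formal once these structural inputs are in place, so the step I expect to be the crux is the compatibility between the translation number and the cocycle at the level of the fiber $\A$-action: namely that the defect $s(g)s(h)s(gh)^{-1}$ genuinely lands in the copy $\{\widehat{t} : t\in\A\}\subseteq\ker p$ on which $\tau$ restricts to $\iota$, rather than in the larger kernel of fiberwise homeomorphisms. This is exactly the content of the relationship between $\trot_{x,\alpha}/\trot_{\mu,\alpha}$ and $[\mathfrak{G}_{x,\alpha}]$ alluded to in the introduction, and verifying it rests on the normalization of the translation numbers on fiber translations. Granting this, the local and mean cases are handled uniformly, each being used only through additivity on $p^{-1}(\phi(K))$ and the value $\iota(t)$ on fiber translations.
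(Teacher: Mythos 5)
Your overall strategy---building a primitive $b$ on $K$ out of the translation number and a set-theoretic section, then computing $\delta b$---is sound, and the parts you actually verify (additivity of $\tau$ on $p^{-1}(\phi(K))$, the normalization $\tau(T_r)=\iota(r)$, and the fact that all relevant elements lie in $p^{-1}(\phi(K))$) are correct. The genuine gap is precisely the step you flag as the crux and then grant yourself: the identity $s(g)s(h)=T_{\mathfrak{G}_{x,\alpha}(g,h)}\,s(gh)$. This is not how $\mathfrak{G}_{x,\alpha}$ is constructed --- it is defined by the integral formula $\mathfrak{G}_{x,\alpha}(g,h)=\int_x^{h(x)}g^*\alpha-\alpha$, and its relation to the extension $0\to\A\to\hG\to\G\to 1$ is a theorem (Theorem \ref{thm:app_gal_kedra_euler}, i.e.\ Gal--K\k{e}dra's Theorem 5.1), not a definition --- and, as you state it, it is false in general. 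The defect $s(g)s(h)s(gh)^{-1}$ of any section lies in $\ker p=\{T_r : r\in\A\}$, so any defect cocycle is necessarily $\A$-valued; but when $\A=\ZZ$ the representative $\alpha$ is a \emph{real} singular cocycle and $\mathfrak{G}_{x,\alpha}$ is generically not integer-valued. For example, for $X=S^1$ and the standard representative one finds $\mathfrak{G}_{x,\alpha}(g,h)=\bigl(\tilde{g}(\widetilde{h(x)})-\widetilde{h(x)}\bigr)-\bigl(\tilde{g}(\tilde{x})-\tilde{x}\bigr)$, which is not an integer for general $g,h\in\Homeo_+(S^1)$; then $T_{\mathfrak{G}_{x,\alpha}(g,h)}$ is not even a well-defined element of $\ker p$. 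Only the cohomology class of $\mathfrak{G}_{x,\alpha}$ with $\RR$-coefficients agrees with that of a defect cocycle; on-the-nose equality can be arranged when $\A=\RR$ (choose $s$ with $\rho_{x,\alpha}\circ s=0$), but proving even that already requires the computation relating $\mathfrak{G}_{x,\alpha}$ to $\rho_{x,\alpha}$, which your proposal never carries out.

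The repair is to replace your structural identity by the one the paper proves in Theorem \ref{thm:app_gal_kedra_euler}: $p^*\mathfrak{G}_{x,\alpha}=-\delta\rho_{x,\alpha}$ on $\hG$, together with $\rho_{x,\alpha}|_{\A}=\id_{\A}$. Then your argument goes through with a small change: the function $u=\rho_{x,\alpha}-\tau$ on $p^{-1}(\phi(K))$ satisfies $u(T_r\hg)=u(\hg)$, hence descends to $\phi(K)$; setting $b(k)=u(\hg_k)$ for any lift $\hg_k$ of $\phi(k)$, one gets $\delta b(k,l)=\delta\rho_{x,\alpha}(\hg_k,\hg_l)-\delta\tau(\hg_k,\hg_l)=-\phi^*\mathfrak{G}_{x,\alpha}(k,l)$, since $\delta\tau=0$ by hypothesis, so $\phi^*\mathfrak{G}_{x,\alpha}$ is a real coboundary. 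This corrected argument is essentially the paper's proof, which packages the same algebra through the pullback extension $0\to\A\to\phi^*\hG\to K\to 1$ and the criterion of Proposition \ref{prop:app_euler_cocycle_conn} and Remark \ref{rem:app_hom_ext_trivial}: the class $\iota_*e(\phi^*\hG)=\phi^*[\mathfrak{G}_{x,\alpha}]$ vanishes because $\iota\colon\A\to\RR$ extends to the homomorphism $\tau$ on the total group of the extension.
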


By using Theorem \ref{thm_appendix}, we obtain the following, which was originally shown in Gal--K\k{e}dra \cite{1105.0825}.
\begin{theorem}[{\cite[Theorem 1.6]{1105.0825}}]\label{thm:app_meas_pres}
  Let $X$ be a compact space and $\mu$ a Borel probability measure on $X$.
  Assume that the class $a \in \HHH^1(X;\A)$ admits a standard representative.
  If a homomorphism $\phi \colon K \to \G$ factors through the group $\Gm$ of $\mu$-preserving homeomorphisms, then the class $\phi^*[\mathfrak{G}_{x, \alpha}]$ is equal to zero in $\HHH_{\grp}^2(K;\RR)$.
\end{theorem}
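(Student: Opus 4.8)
The plan is to deduce this directly from Theorem \ref{thm_appendix}, applied with the mean translation number $\trot_{\mu, \alpha}$. Thus the whole task reduces to verifying the two hypotheses of that theorem on the subgroup $p^{-1}(\phi(K)) \subseteq \Gh$: that $\trot_{\mu, \alpha}$ is \emph{defined} there, and that it is a \emph{homomorphism} there.

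First I would use the factorization assumption. Since $\phi$ factors through the subgroup $\Gm$ of $\mu$-preserving homeomorphisms, we have $\phi(K) \subseteq \Gm$, so every base homeomorphism $g = p(\hat{g})$ with $\hat{g} \in p^{-1}(\phi(K))$ preserves $\mu$. This is precisely the invariance condition under which the mean translation number is defined (Definition \ref{def:mean_t}). Because $X$ is compact and $a$ admits a standard representative $\alpha$, the associated fiberwise displacement function $\theta$ is continuous by Lemma \ref{lem:conti_theta}, hence bounded on the compact space $X$ and therefore $\mu$-integrable; so the defining integral converges for each such $\hat{g}$, and $\trot_{\mu, \alpha}$ is defined on all of $p^{-1}(\phi(K))$ (which is a subgroup, being the preimage of a subgroup under the homomorphism $p$).

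The key step is additivity. Unwinding Definition \ref{def:mean_t}, the value $\trot_{\mu, \alpha}(\hat{g})$ is the integral of $\theta(\hat{g}, \cdot)$ against $\mu$, and the displacement function satisfies the base-dynamical cocycle identity $\theta(\hat{g}\hat{h}, x) = \theta(\hat{g}, h(x)) + \theta(\hat{h}, x)$ for $\hat{g}, \hat{h} \in p^{-1}(\phi(K))$, where $h = p(\hat{h})$. Integrating against $\mu$ and changing variables $y = h(x)$ in the first summand, the invariance $h_*\mu = \mu$ gives $\int_X \theta(\hat{g}, h(x)) \, d\mu(x) = \int_X \theta(\hat{g}, y) \, d\mu(y) = \trot_{\mu, \alpha}(\hat{g})$, so that $\trot_{\mu, \alpha}(\hat{g}\hat{h}) = \trot_{\mu, \alpha}(\hat{g}) + \trot_{\mu, \alpha}(\hat{h})$. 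With both hypotheses verified, Theorem \ref{thm_appendix} yields $\phi^*[\mathfrak{G}_{x, \alpha}] = 0$ in $\HHH_{\grp}^2(K;\RR)$.

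The main obstacle is the careful bookkeeping behind the additivity computation: one must confirm that $\theta$ is genuinely a twisted cocycle for the base action and that the change-of-variables step is rigorous, i.e.\ that each summand is separately integrable and that $h_*\mu = \mu$ may legitimately be applied. This is exactly where the two standing hypotheses of the statement are used—compactness together with a standard representative deliver, via Lemma \ref{lem:conti_theta}, the continuity and hence boundedness of $\theta$ that make the integrals finite. Once that analytic input is secured, the homomorphism property is the familiar observation that integrating a displacement cocycle against an invariant measure kills the coboundary term, and the vanishing of $\phi^*[\mathfrak{G}_{x, \alpha}]$ follows formally.
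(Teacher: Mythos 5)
Your proposal is correct and follows essentially the same route as the paper: take a standard representative $\alpha$, note that compactness plus continuity of $\theta$ makes $\trot_{\mu,\alpha}$ defined on the relevant preimage (the paper cites Remark \ref{rem:existence_trans_numbers}), establish additivity from the cocycle identity for $\rho_{x,\alpha}$ together with $\mu$-invariance (the paper cites Proposition \ref{prop:trans_number_hom}), and then apply Theorem \ref{thm_appendix}. The only difference is that you re-derive these two cited ingredients inline rather than invoking them, which is harmless.
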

Under the settings in Theorem \ref{thm_appendix} (or in Theorem \ref{thm:app_meas_pres}), we further assume that the cohomology class $a$ is in $\HHH^2(X;\ZZ)$.
Then we can define an integer-valued cocycle $\mathfrak{G}_{x, \alpha, \ZZ}$ which satisfies $[\mathfrak{G}_{x, \alpha}] = [\mathfrak{G}_{x, \alpha, \ZZ}]$ in $\HHH_{\grp}^2(\G;\RR)$.
However, the integral cohomology class $\phi^*[\mathfrak{G}_{x, \alpha, \ZZ}] \in \HHH_{\grp}^2(K;\ZZ)$ does not necessarily equal to zero (see Remark \ref{rem:integral_not_vanish}).


We also provide several Seifert-fibered $3$-manifolds that the cohomology class $[\mathfrak{G}_{x, \alpha}]$ is non-zero (Theorem \ref{thm:app_seifert}).

In Section \ref{sec:appl2}, 
we use the translation numbers to study distortion in the group $\hG$ of bundle automorphisms (see Subsection \ref{subsec:distortion} for the definition of distortion).
The translation numbers
detect undistortion elements in $\hG$ as follows.

\begin{theorem}\label{thm:distortion}
  Let $X$ be a compact space and $\alpha$ a standard representative.
  If either a local translation number $\trot_{x, \alpha}(\hg)$ or a mean translation number $\trot_{\mu, \alpha}(\hg)$ is non-zero, then $\hg$ is undistorted in $\hG$.
\end{theorem}

In \cite{MR2854098}, Gal and K\k{e}dra studied undistortion elements in the group $\G = \Homeo(X, a)$.
Theorem \ref{thm:distortion} can be seen as a $\hG$-analogue of their results \cite[Theorems 1.1 and 1.6]{MR2854098}.

\section{Translation numbers}\label{sec:local_trans_number}


In this section, we give the definitions of the local translation number $\trot_{x, \alpha}$ and the mean translation number $\trot_{\mu, \alpha}$.

Recall that the correspondence between a flat bundle and its holonomy homomorphism gives a bijection from the set of isomorphism classes of principal $\A$-bundles over $X$ to $\Hom(\pi_1(X),\A) \cong \HHH^1(X;\A)$.
Let $\pi \colon \hX \to X$ be a principal $\A$-bundle corresponding to the class $a \in \HHH^1(X;\A)$ and $\hG$ the group of bundle automorphisms of $\hX$.
Since $\hX \to X$ is a principal $\A$-bundle, the group $\A$ acts on $\hX$ and this defines a bundle automorphism $T_r \colon \hX \to \hX$ for any $r \in \A$.
Note that the bundle automorphism group $\hG$ is given as
\begin{align}\label{hG_def}
  \hG = \{ \hg \in \Homeo(\hX) \colon \hg \circ T_r = T_r \circ \hg \ \text{ for any } r \in \A \}.
\end{align}

For $a \in \HHH^1(X;\A)$, let $\alpha$ be a singular cocycle with coefficients in $\RR$ which represents $a$.
Here, when $\A = \ZZ$, we regard the class $a \in \HHH^1(X;\ZZ)$ as an element of $\HHH^1(X;\RR)$ under the change of coefficients homomorphism $\HHH^1(X;\ZZ) \to \HHH^1(X;\RR)$.
Since $\pi^* a = 0$, there exists a singular zero-cochain $\theta$ with coefficients in $\RR$ on $\hX$ such that $d \theta = \pi^* \alpha$, where $d$ denotes the coboundary operator of the singular cochain complex.
We regard $\theta$ as a map from $\hX$ to $\RR$.

\begin{notation}
  For a singular chain $c$ and singular cochain $u$ on $X$, let $\int_{c}u$ denote their pairing.
\end{notation}

\begin{lemma}\label{lem:theta}
  There exists a cochain $\theta$ satisfying $d \theta = \pi^* \alpha$ and
  \begin{align}\label{eq:fiber_direction}
    \theta(T_r(\hx)) = \theta(\hx) + r
  \end{align}
  for any $\hx \in \hX$ and $r \in \A$.
\end{lemma}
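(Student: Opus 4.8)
The plan is to start from the given cochain $\theta_0$ satisfying $d\theta_0 = \pi^*\alpha$, which exists because $\pi^*a = 0$, and then modify it by a constant-on-fibers correction so that it acquires the desired equivariance \eqref{eq:fiber_direction}. The key observation is that the defining equation $d\theta = \pi^*\alpha$ pins $\theta$ down only up to a cocycle, i.e. up to a locally constant function on $\widehat{X}_a$; conversely, adding any function that is constant along the base directions (pulled back from $X$) does not disturb the condition $d\theta = \pi^*\alpha$ only if that function is itself closed, so the freedom I may exploit is addition of a cocycle $z$ with $dz = 0$.

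First I would analyze how an arbitrary solution $\theta_0$ behaves under the deck translations $T_r$. For a fixed $r \in \A$, consider the function $\widehat{x} \mapsto \theta_0(T_r(\widehat{x})) - \theta_0(\widehat{x})$ on $\widehat{X}_a$. Its coboundary is $d\bigl(T_r^*\theta_0 - \theta_0\bigr) = T_r^*(\pi^*\alpha) - \pi^*\alpha = \pi^*\alpha - \pi^*\alpha = 0$, using $\pi \circ T_r = \pi$. Hence this difference is a $0$-cocycle, i.e. locally constant, and since $\widehat{X}_a$ is connected (as $X$ is path-connected and the fibers $\A$ may be disconnected, one must here take care) it takes a single value on each connected component. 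The natural guess is that this value equals $r$, which is exactly what one checks by evaluating the pairing of $\theta_0$ against a path in $\widehat{X}_a$ from $\widehat{x}$ to $T_r(\widehat{x})$: the integral $\int_{\gamma}d\theta_0 = \int_\gamma \pi^*\alpha = \int_{\pi\circ\gamma}\alpha$ is computed over a loop $\pi\circ\gamma$ in $X$ whose class pairs with $a$ to give $r$, by the identification of $a$ with the holonomy homomorphism $\pi_1(X) \to \A$.

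Therefore I would establish that $\theta_0(T_r(\widehat{x})) - \theta_0(\widehat{x}) = r + c(\widehat{x})$ where $c$ is locally constant and independent of $r$ in the appropriate sense; more carefully, the map $r \mapsto \bigl(\theta_0\circ T_r - \theta_0\bigr)$ should be shown to be a homomorphism-like object, and one deduces that after subtracting the value at the basepoint component the residual discrepancy is a genuine $0$-cocycle $z$ on $\widehat{X}_a$. Setting $\theta = \theta_0 - z$ (choosing $z$ pulled back appropriately so that $dz = 0$ is preserved and $d\theta = \pi^*\alpha$ still holds) yields the required $\theta(T_r(\widehat{x})) = \theta(\widehat{x}) + r$.

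The main obstacle I anticipate is the bookkeeping around connectedness of $\widehat{X}_a$ and the two coefficient cases $\A = \ZZ$ and $\A = \RR^\delta$ simultaneously: when $\A = \RR^\delta$ the fibers are totally disconnected, so ``locally constant'' statements must be interpreted with the discrete topology on $\A$, and the argument that the fiberwise discrepancy equals exactly $r$ (rather than $r$ plus a component-dependent constant) requires carefully relating the cocycle value along a fiber path to the holonomy pairing. Making the correction $z$ genuinely a cocycle while also arranging $\theta\circ T_r - \theta = r$ uniformly for all $r$ is where the homomorphism property of the holonomy and the additivity $T_r \circ T_s = T_{r+s}$ must be invoked together; I expect this compatibility check to be the delicate step rather than any hard analysis.
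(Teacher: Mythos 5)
Your overall strategy coincides with the paper's: take an arbitrary primitive $\theta_0$ of $\pi^*\alpha$, note that the ambiguity in $\theta_0$ is exactly a locally constant function, and use the identity $\int_{\gamma}d\theta_0 = \int_{\pi(\gamma)}\alpha = r$ along a path $\gamma$ from $\hx$ to $T_r(\hx)$ (when the two lie in one path component) to see that the equivariance already holds within components. These are precisely the paper's first two steps, so the approach is the right one. The problem is what comes after.

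The gap is in the construction of the correction, and the intermediate claim you lean on is false as stated. Write $c_r(\hx) = \theta_0(T_r(\hx)) - \theta_0(\hx) - r$. This is locally constant in $\hx$, but it is \emph{not} independent of $r$ in any useful sense: since $c_0 \equiv 0$, independence of $r$ would force $c_r \equiv 0$ for all $r$, i.e.\ that $\theta_0$ was already equivariant --- false for a generic primitive (add any non-equivariant locally constant function to an equivariant one). What is true is the cocycle relation $c_{r+s}(\hx) = c_r(T_s(\hx)) + c_s(\hx)$, and what the lemma actually requires is that this family be a coboundary: a single locally constant $z$ with $z(T_r(\hx)) - z(\hx) = c_r(\hx)$ for all $r$ and $\hx$ simultaneously. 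Your proposal asserts that such a $z$ exists ("the residual discrepancy is a genuine $0$-cocycle") and flags the verification as delicate, but never performs it --- and that verification is the entire content of the lemma. The paper closes it in two moves absent from your sketch: first, choose the locally constant correction so that $\theta(T_r(\hy)) = \theta(\hy) + r$ holds on \emph{one} fixed fiber $\pi^{-1}(y)$ (consistent because, within each path component, the discrepancy vanishes by the holonomy identity, so only one constant per component is needed); second, propagate from that fiber to an arbitrary $\hx$ by lifting a path $\sigma$ from $\pi(\hx)$ to $y$ and computing $\theta(T_r(\hx)) - \theta(\hx) = r - \int_{\sigma}\alpha + \int_{\sigma}\alpha = r$. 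Equivalently, one can define $z$ on the component of $T_t(\hx_0)$ to be $\theta_0(T_t(\hx_0)) - \theta_0(\hx_0) - t$ for a basepoint $\hx_0$, with well-definedness again supplied by the holonomy identity. Either way, some such explicit construction and check is needed where your argument currently says "one deduces".
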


\begin{proof}
  Let $\theta'$ be a cochain satisfying $d \theta' = \pi^* \alpha$.
  Let us fix a point $y$ in $X$ and $\hy$ a point in the fiber on $y$.
  First we consider the case where $\hy$ and $T_r(\hy)$ belong to the same path-connected component.
  We take a path $\gamma \colon [0,1] \to \hX$ such that $\gamma(0) = \hy$ and $\gamma(1) = T_r(\hy)$.
  Then, we have
  \begin{align*}
    \theta'(T_r(\hy)) - \theta'(\hy) = \int_{\gamma} d \theta' = \int_{\gamma} \pi^*\alpha = \int_{\pi(\gamma)} \alpha. 
  \end{align*}
  Since $\alpha$ represents the class $a$ and it corresponds to the holonomy homomorphism of $\hX \to X$, the value $\int_{\pi(\gamma)} \alpha$ is equal to the difference of $\gamma(1)$ and $\gamma(0)$, that is, equal to $r$.
  For any locally constant function $C \colon \hX \to \RR$, the sum $\theta = \theta' + C$ also satisfies $d \theta = \pi^* \alpha$.
  With a suitable choice of $C$, the cochain $\theta = \theta' + C$ satisfies
  \[
    \theta(T_r(\hy)) = \theta(\hy) + r
  \]
  for any $r \in \A$ and any $\hy$ in the fiber on $y$.
  This $\theta$ is a desired cochain.
  Indeed, let $x$ be an arbitrary point in $X$ and $\hx \in \hX$ its lift.
  For a path $\sigma$ from $x$ to $y$, there exists a unique lift $\tilde{\sigma}$ starting at $\hx$.
  Since the other endpoint is written as $T_s(\hy)$ for some $s \in \A$, we have
  \begin{align*}
    \theta(T_r(\hx)) - \theta(\hx) &= \theta(T_{r+s}(\hy)) - \int_{T_r\tilde{\sigma}} d \theta - \left( \theta(T_s(\hy)) - \int_{\tilde{\sigma}} d \theta \right)\\
    & = r - \int_{\sigma} \alpha + \int_{\sigma} \alpha = r.
  \end{align*}
\end{proof}

\begin{definition}
  Let $x$ be a point in $X$ and $\theta$ a singular zero-cochain on $\hX$ as in Lemma \ref{lem:theta}.
  The function $\rho_{x, \alpha} \colon \hG \to \A$ is defined by
  \[
    \rho_{x, \alpha}(\hg) = \theta(\hg(\hx)) - \theta(\hx),
  \]
  where $\hx$ is a point in the fiber on $x$.
\end{definition}

Note that, by Lemma \ref{lem:theta}, the function $\rho_{x, \alpha}$ is independent of the choice of $\theta$ and the lift $\hx$ of $x$.

\begin{definition}[Local translation number]\label{def:local_t}
  For an element $\hg \in \hG$, its \textit{local translation number} $\trot_{x, \alpha}(\hg)$ is defined by
  \[
    \trot_{x, \alpha}(\hg) = \lim_{n \to \infty} \frac{\rho_{x, \alpha}(\hg^n)}{n}
  \]
  if this limit exists.
\end{definition}

\begin{remark}\label{rem:differ_from_GK}
  In \cite[Proposition 3.10]{MR2854098}, Gal and K\k{e}dra considered the value
  \begin{align}\label{def:GKlocal}
    \lim_{n \to \infty} \frac{\theta(\hg^n(\hx))}{n}
  \end{align}
  for an arbitrary zero-cochain $\theta$ satisfying $d\theta = \pi^*\alpha$ (i.e., $\theta$ does not necessarily satisfy condition (\ref{eq:fiber_direction})) when $\A = \ZZ$.
  If $\A = \RR$, limit (\ref{def:GKlocal}) depends on the choice of $\theta$.
\end{remark}

If the function $\rho_{x, \alpha}$ is a quasimorphism on the cyclic subgroup $\langle \hg \rangle \subset \hG$, that is, the set $\{ \rho_{x, \alpha}(\hg^{m+n}) - \rho_{x, \alpha}(\hg^m) - \rho_{x, \alpha}(\hg^n) \mid m, n \in \ZZ \}$ is bounded,
then the limit exists (see \cite[Lemma 2.21]{Cal} for example). 
In particular, if $\hg$ preserves the fiber on $x$ (as a set), the local translation number $\trot_{x, \alpha}(\hg)$ is defined.

Another condition that guarantees the existence of the limit comes from ergodic theory.
Let $g \colon X \to X$ be a homeomorphism covered by $\hg$ and $\mu$ a $g$-invariant Borel probability measure on $X$.
If the function $\rho_{x, \alpha}(\hg)$ is in $L^1(\mu)$, the limit exists $\mu$-a.e. and $\trot_{x, \alpha}(\hg)$ is also in $L^1(\mu)$ by Birkhoff's ergodic theorem.

\begin{definition}[Mean translation number]\label{def:mean_t}
  For an element $\hg \in \hG$, its \textit{mean translation number} $\trot_{\mu, \alpha}(\hg)$ is defined by
  \[
    \trot_{\mu, \alpha}(\hg) = \int_{X} \rho_{x, \alpha}(\hg) \, d\mu(x)
  \]
  if this integral exists.
\end{definition}

\begin{remark}\label{rem:mean_t}
  \begin{enumerate}[$(1)$]
    \item Birkhoff's ergodic theorem also implies that $\trot_{\mu, \alpha}(\hg) = \displaystyle\int_{X}\trot_{x, \alpha}(\hg) \, d\mu(x)$.
    Moreover, if $g$ is $\mu$-ergodic, the local translation number $\trot_{x, \alpha}(\hg)$ is constant $\mu$-a.e. and equal to the mean translation number $\trot_{\mu, \alpha}(\hg)$ $\mu$-a.e.
    \item For any homeomorphism $g$ on a compact metric space $X$, there always exists a $g$-invariant Borel probability measure $\mu$ on $X$ by the Krylov--Bogolioubov theorem (see \cite[Corollary 6.9.1]{MR648108} for example).
  \end{enumerate}
\end{remark}


\begin{proposition}\label{prop:trans_number_hom}
  Let $g$ and $h$ be elements of $\G$ covered by $\hg$ and $\hh$ in $\hG$, respectively.
  \begin{enumerate}[$(1)$]
    \item If $g$ and $h$ preserve a point $x$ in $X$, then
    \[
      \trot_{x, \alpha}(\hg \hh) = \trot_{x, \alpha}(\hg) + \trot_{x, \alpha}(\hh).
    \]
    \item If $g$ and $h$ preserve a Borel probability measure $\mu$ on $X$, then
    \[
      \trot_{\mu, \alpha}(\hg \hh) = \trot_{\mu, \alpha}(\hg) + \trot_{\mu, \alpha}(\hh)
    \]
    if each term exists.
  \end{enumerate}
\end{proposition}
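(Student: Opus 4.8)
The plan is to first establish a cocycle-type identity for $\rho_{x, \alpha}$ that relates its values at different base points, and then to read off both statements from it. For $\hg, \hh \in \hG$ covering $g, h \in \G$ and a lift $\hx$ of $x$, I would put $\hy = \hh(\hx)$ and decompose
\begin{align*}
  \rho_{x, \alpha}(\hg \hh) = \theta(\hg(\hy)) - \theta(\hx) = \bigl( \theta(\hg(\hy)) - \theta(\hy) \bigr) + \bigl( \theta(\hy) - \theta(\hx) \bigr).
\end{align*}
The second summand is $\rho_{x, \alpha}(\hh)$ by definition. Since $\hh$ covers $h$, the point $\hy$ lies in the fiber over $h(x)$, and because $\rho_{x, \alpha}$ does not depend on the chosen lift (Lemma \ref{lem:theta}), the first summand equals $\rho_{h(x), \alpha}(\hg)$. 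This yields the key identity
\begin{align}\label{eq:cocycle_rho}
  \rho_{x, \alpha}(\hg \hh) = \rho_{h(x), \alpha}(\hg) + \rho_{x, \alpha}(\hh).
\end{align}

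For part (1), I would restrict attention to the automorphisms in $\hG$ whose covered homeomorphism fixes $x$; these form a subgroup, and $\hg$, $\hh$, $\hg\hh$ all lie in it by hypothesis. On this subgroup the base point $h(x)$ in (\ref{eq:cocycle_rho}) is just $x$, so the identity collapses to $\rho_{x, \alpha}(\hg\hh) = \rho_{x, \alpha}(\hg) + \rho_{x, \alpha}(\hh)$; that is, $\rho_{x, \alpha}$ is a genuine homomorphism to $\A$ there. In particular $\rho_{x, \alpha}(\hg^n) = n\, \rho_{x, \alpha}(\hg)$ for every such $\hg$, so the defining sequence is constant and $\trot_{x, \alpha}(\hg) = \rho_{x, \alpha}(\hg)$. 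Reading this off for $\hg$, $\hh$, and $\hg\hh$ gives the claimed additivity.

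For part (2), I would integrate (\ref{eq:cocycle_rho}) against $\mu$, so that
\begin{align*}
  \trot_{\mu, \alpha}(\hg \hh) = \int_X \rho_{h(x), \alpha}(\hg) \, d\mu(x) + \int_X \rho_{x, \alpha}(\hh) \, d\mu(x).
\end{align*}
The second integral is $\trot_{\mu, \alpha}(\hh)$ by definition. The first integrand is the composition of the function $x \mapsto \rho_{x, \alpha}(\hg)$ with $h$, so the assumed $h$-invariance of $\mu$ gives $\int_X \rho_{h(x), \alpha}(\hg) \, d\mu(x) = \int_X \rho_{x, \alpha}(\hg) \, d\mu(x) = \trot_{\mu, \alpha}(\hg)$, and additivity follows.

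The only substantive step is the identity (\ref{eq:cocycle_rho}); everything else is bookkeeping. I expect the point needing the most care to be the appeal to lift-independence of $\rho_{x, \alpha}$ at the shifted base point $h(x)$ when deriving (\ref{eq:cocycle_rho}), and, in part (2), the justification of splitting the integral and performing the change of variables, which is legitimate precisely because each mean translation number is assumed to exist.
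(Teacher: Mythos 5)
Your proof is correct and takes essentially the same route as the paper: the paper's entire proof consists of the cocycle identity $\rho_{x, \alpha}(\hg \hh) = \rho_{h(x), \alpha}(\hg) + \rho_{x, \alpha}(\hh)$, obtained by exactly your telescoping decomposition of $\theta(\hg\hh(\hx)) - \theta(\hx)$, after which it states that the proposition follows. Your write-up merely makes explicit the routine deductions the paper leaves to the reader, namely that $\rho_{x,\alpha}$ is a homomorphism (hence equals $\trot_{x,\alpha}$) on the subgroup covering the stabilizer of $x$ for part (1), and the $\mu$-invariant change of variables for part (2).
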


\begin{proof}
  Since
  \[
    \rho_{x, \alpha}(\hg \hh) = \theta(\hg\hh(\hx)) - \theta(\hh(\hx)) + \theta(\hh(\hx)) - \theta(\hx) = \rho_{h(x), \alpha}(\hg) + \rho_{x, \alpha}(\hh)
  \]
  for any $\hg$ and $\hh$ in $\hG$, the proposition follows.
\end{proof}

\begin{proposition}\label{prop:periodic}
  Let $\A$ be the group $\ZZ$ of integers and $g$ an element of $\G$ covered by $\hg \in \hG$.
  If $g$ has a periodic point $x$ of period $q$, then the following hold:
  \begin{enumerate}[$(1)$]
    \item The local translation number $\trot_{x, \alpha}(\hg)$ is of the form $n/q$ for some $n \in \ZZ$.
    \item Assume that $\rho_{x, \alpha}(\hg)$ is measurable.
    Then, there exists a $g$-invariant Borel probability measure $\mu$ such that the mean translation number $\trot_{\mu, \alpha}$ is equal to $n/q$ for some $n \in \ZZ$.
  \end{enumerate}
\end{proposition}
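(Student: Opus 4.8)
The plan is to reduce everything to a single computation of $\rho_{x, \alpha}(\hg^q)$, exploiting the cocycle identity
\[
  \rho_{x, \alpha}(\hg \hh) = \rho_{h(x), \alpha}(\hg) + \rho_{x, \alpha}(\hh)
\]
established in the proof of Proposition \ref{prop:trans_number_hom}. Since $g^q(x) = x$, the automorphism $\hg^q$ preserves the fiber over $x$; because the fiber of a principal $\ZZ$-bundle is a free $\ZZ$-torsor, there is a unique $m \in \ZZ$ with $\hg^q(\hx) = T_m(\hx)$, so that $\rho_{x, \alpha}(\hg^q) = m$ by \eqref{eq:fiber_direction}. Iterating the cocycle identity I would first rewrite this as a Birkhoff-type sum over the orbit, $\rho_{x, \alpha}(\hg^q) = \sum_{i=0}^{q-1} \rho_{g^i(x), \alpha}(\hg)$, and observe that replacing the base point $x$ by any other orbit point $g^j(x)$ merely cyclically permutes the summands; hence $\rho_{g^j(x), \alpha}(\hg^q) = m$ for every $j$. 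In other words, $\hg^q$ translates the fiber over each orbit point by the same integer $m$, and therefore $\hg^{qk}$ translates each such fiber by $km$, giving $\rho_{g^j(x), \alpha}(\hg^{qk}) = km$.

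For part (1), I would then treat a general exponent by writing $n = qk + s$ with $0 \le s < q$ and applying the cocycle identity to the factorization $\hg^n = \hg^{qk} \cdot \hg^s$:
\[
  \rho_{x, \alpha}(\hg^n) = \rho_{g^s(x), \alpha}(\hg^{qk}) + \rho_{x, \alpha}(\hg^s) = km + \rho_{x, \alpha}(\hg^s).
\]
As $s$ ranges only over the finite set $\{0, 1, \dots, q-1\}$, the remainder term $\rho_{x, \alpha}(\hg^s)$ is bounded. Dividing by $n$ and letting $n \to \infty$, so that $k/n \to 1/q$, the bounded remainder washes out and $\trot_{x, \alpha}(\hg) = m/q$; in particular the limit exists and has the claimed form with numerator $m \in \ZZ$.

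For part (2), I would take $\mu$ to be the uniform measure supported on the orbit, $\mu = \tfrac{1}{q} \sum_{i=0}^{q-1} \delta_{g^i(x)}$, which is $g$-invariant because $g$ cyclically permutes the orbit points. Under the measurability hypothesis the integral of Definition \ref{def:mean_t} is defined, and against this atomic $\mu$ it reduces to the finite average
\[
  \trot_{\mu, \alpha}(\hg) = \frac{1}{q} \sum_{i=0}^{q-1} \rho_{g^i(x), \alpha}(\hg) = \frac{1}{q}\, \rho_{x, \alpha}(\hg^q) = \frac{m}{q},
\]
again of the form $n/q$ with $n = m$.

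The only genuine content, and the step I expect to require the most care, is the cyclic invariance of the Birkhoff sum $\sum_{i=0}^{q-1} \rho_{g^i(x), \alpha}(\hg)$, i.e. the fact that $\hg^q$ shifts the fiber over \emph{every} orbit point by the same integer $m$; once this uniformity is secured, both the convergence of the limit in (1) and the evaluation of the integral in (2) are immediate. I would also take care to note that $\rho_{x, \alpha}(\hg^q)$ is genuinely an integer, since it is the fiberwise translation amount $m$ indexed by $\ZZ$, regardless of whether $\rho_{x, \alpha}$ is integer-valued on all of $\hG$.
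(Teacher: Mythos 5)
Your proof is correct, and its skeleton matches the paper's: both hinge on the integer $m$ with $\hg^q(\hx) = T_m(\hx)$, and both use the orbit measure $\mu = \frac{1}{q}\sum_{i=0}^{q-1}\delta_{g^i(x)}$ in part (2). The difference lies in how the two analytic steps are discharged, and your route is the more self-contained one. For (1), the paper does not use the division algorithm: it notes that $\rho_{x,\alpha}$ is a quasimorphism on the cyclic subgroup $\langle \hg \rangle$, invokes the general fact that quasimorphisms have convergent homogenization limits (\cite[Lemma 2.21]{Cal}), and then evaluates the limit along the subsequence of exponents $qk$, where $\rho_{x,\alpha}(\hg^{qk}) = mk$. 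Your decomposition $\hg^n = \hg^{qk}\hg^{s}$ with bounded remainder $\rho_{x,\alpha}(\hg^s)$ proves convergence of the full sequence directly, with no external lemma. For (2), the paper integrates the local translation numbers, using the Birkhoff identity $\trot_{\mu,\alpha}(\hg) = \int_X \trot_{x,\alpha}(\hg)\,d\mu(x)$ of Remark \ref{rem:mean_t} together with part (1) applied at each orbit point, whereas you evaluate the defining integral of Definition \ref{def:mean_t} directly as $\frac{1}{q}\sum_{i}\rho_{g^i(x),\alpha}(\hg) = \frac{1}{q}\rho_{x,\alpha}(\hg^q) = \frac{m}{q}$, needing neither Birkhoff's theorem nor part (1). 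Finally, the uniformity you rightly flag as the crux --- that $\hg^q$ translates the fiber over \emph{every} orbit point by the same $m$ --- is exactly what the paper uses implicitly when it asserts $\trot_{g^i(x),\alpha}(\hg) = n/q$ with the same numerator for all $i$; your cyclic-permutation argument establishes it, and it also follows in one line from commutation: $\hg^q\hg^i(\hx) = \hg^i\hg^q(\hx) = \hg^i T_m(\hx) = T_m\hg^i(\hx)$.
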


\begin{proof}
  Take a lift $\hx \in \hX$ of $x$.
  By the assumption of $x$, there exists an integer $n \in \ZZ$ such that $\hg^q(\hx) = T_n (\hx)$.
  \begin{enumerate}[$(1)$]
    \item Notice that the local translation number for a periodic point $x$ always exists since $\rho_{x, \alpha}$ is a quasimorphism on the cyclic subgroup $\langle \hg \rangle$.
    Since $\theta(T_n(\hx)) = \theta(\hx) + n$, we have $\rho_{x, \alpha}(\hg^{qk}) = nk$ for any $k \in \ZZ$.
    Hence we have
    \begin{align*}
      \trot_{x, \alpha}(\hg) = \lim_{k \to \infty} \frac{\rho_{x, \alpha}(\hg^k)}{k} = \lim_{k \to \infty} \frac{\rho_{x, \alpha}(\hg^{qk})}{qk} = \frac{1}{q} \lim_{k \to \infty} \frac{nk}{k} = \frac{n}{q}.
    \end{align*}
    \item For $y \in X$, let $\delta_y$ denote the Dirac measure on $y$, and we define a $g$-invariant Borel probability measure $\mu$ by
    \[
      \mu = \frac{1}{q} \sum_{i = 0}^{q-1} \delta_{g^{i}(x)}.
    \]
    By (1), we have $\trot_{g^i(x), \alpha}(\hg) = n/q$ for any $0 \leq i \leq q-1$.
    Hence we obtain
    \[
      \trot_{\mu, \alpha}(\hg) = \int_{X} \trot_{x, \alpha}(\hg) \, d\mu(x) = \frac{n}{q}.
    \]
  \end{enumerate}
\end{proof}

\begin{remark}
  Even if the translation numbers $\trot_{x, \alpha}(\hg)$ and $\trot_{\mu, \alpha}(\hg)$ are equal to zero, the homeomorphism $g$ may not have a periodic point.
  Let $X = T^2$ be the two-dimensional torus and $a$ the cohomology class corresponding to $(1, 0) \in \ZZ \times \ZZ = \HHH^1(X;\ZZ)$.
  Then the space $\hX$ is homeomorphic to $\RR \times S^1$.
  For an irrational number $r$, we define $g_r \colon X \to X$ and its lift $\hg_r$ by
  \[
    g_r (z,w) = (z, w e^{2\pi ir})
  \]
  and
  \[
    \hg_r (x,w) = (x, w e^{2\pi ir}),
  \]
  respectively.
  Then, for $\alpha = dz$, the translation numbers are equal to zero although $g_r$ has no periodic points.
\end{remark}

Under the setting in Proposition \ref{prop:periodic}, the local translation number $\trot_{x, \alpha}(\hg)$ and the mean translation number $\trot_{\mu, \alpha}(\hg)$ are independent of the choice of $\alpha$.
This is generalized as follows:
\begin{proposition}\label{prop:alpha-independence}
  Let $\alpha_1$ and $\alpha_2$ be singular one-cocycles representing the class $a \in \HHH^1(X;\A)$ and $\beta$ a singular zero-cochain on $X$ satisfying $\alpha_1 - \alpha_2 = d\beta$.
  We regard $\beta$ as a function $\beta \colon X \to \A$.
  Let $g$ be an element of $\G$ covered by $\hg \in \hG$.
  \begin{enumerate}[$(1)$]
    \item Assume that $\trot_{x, \alpha_1}(\hg)$ and $\trot_{x, \alpha_2}(\hg)$ are defined.
    If $\beta$ is a bounded function on the orbit $\{ g^i(x) \}_{i \in \NN}$, the equality
    \[
      \trot_{x, \alpha_1}(\hg) = \trot_{x, \alpha_2}(\hg)
    \]
    holds.
    \item Let $\mu$ be a $g$-invariant Borel probability measure on $X$ and assume that $\trot_{\mu, \alpha_1}(\hg)$ and $\trot_{\mu, \alpha_2}(\hg)$ are defined.
    If $\displaystyle \int_{X} \bigl( \beta(g(x)) - \beta(x) \bigr) d\mu(x) = 0$, the equality
    \[
      \trot_{\mu, \alpha_1}(\hg) = \trot_{\mu, \alpha_2}(\hg)
    \]
    holds.
  \end{enumerate}
\end{proposition}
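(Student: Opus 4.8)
The plan is to reduce both statements to a single clean identity comparing the two functions $\rho_{x, \alpha_1}$ and $\rho_{x, \alpha_2}$, and then to read off (1) as a limit and (2) as an integral. First I would fix, for $i = 1, 2$, a cochain $\theta_i$ on $\hX$ as provided by Lemma \ref{lem:theta}, so that $d\theta_i = \pi^*\alpha_i$ and $\theta_i(T_r(\hx)) = \theta_i(\hx) + r$. Recall that $\rho_{x, \alpha_i}(\hg) = \theta_i(\hg(\hx)) - \theta_i(\hx)$ is independent of the choices involved, so the entire problem amounts to understanding the difference $\theta_1 - \theta_2$.

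The key step is to show that $\theta_1 - \theta_2$ equals $\pi^*\beta$ up to an additive constant. Set $\psi = \theta_1 - \theta_2 - \pi^*\beta$, regarded as a function $\hX \to \RR$. Then $d\psi = \pi^*(\alpha_1 - \alpha_2 - d\beta) = 0$, so $\psi$ is constant along every path in $\hX$. The fiber conditions on $\theta_1$ and $\theta_2$ contribute $+r$ and $-r$, which cancel, while $\pi^*\beta$ is constant along fibers, so $\psi(T_r(\hx)) = \psi(\hx)$; thus $\psi$ is $\A$-invariant and descends to a function $\psi_0$ on $X$. Since paths in $X$ lift to paths in $\hX$ and $X$ is path-connected, $\psi_0$ — and hence $\psi$ — is a global constant. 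This is the step I expect to carry the real content: because $\hX$ itself may be disconnected, one cannot conclude constancy directly on $\hX$, and it is precisely the path-connectedness of the base $X$ together with $\A$-invariance that forces $\psi$ to be constant. Everything after this is formal.

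With $\psi$ constant, the constant cancels in the difference, and using $\pi \circ \hg^n = g^n \circ \pi$ (so $(\pi^*\beta)(\hg^n(\hx)) = \beta(g^n(x))$) I obtain, for every $n$,
\[
  \rho_{x, \alpha_1}(\hg^n) - \rho_{x, \alpha_2}(\hg^n) = \beta(g^n(x)) - \beta(x).
\]
For part (1), dividing by $n$ and letting $n \to \infty$ yields
\[
  \trot_{x, \alpha_1}(\hg) - \trot_{x, \alpha_2}(\hg) = \lim_{n \to \infty} \frac{\beta(g^n(x)) - \beta(x)}{n},
\]
and the boundedness of $\beta$ on the orbit $\{ g^i(x) \}_{i \in \NN}$ forces the right-hand side to vanish. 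For part (2), I set $n = 1$ and integrate the identity against $\mu$; since both mean translation numbers are assumed defined, the integrand is $\mu$-integrable and
\[
  \trot_{\mu, \alpha_1}(\hg) - \trot_{\mu, \alpha_2}(\hg) = \int_X \bigl( \beta(g(x)) - \beta(x) \bigr) \, d\mu(x),
\]
which is zero by hypothesis. In both cases the asserted equality follows at once.
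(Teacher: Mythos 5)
Your proposal is correct and follows essentially the same route as the paper: both fix $\theta_i$ from Lemma \ref{lem:theta}, show that $\theta_1 - \theta_2 - \pi^*\beta$ is a global constant (local constancy from $d(\theta_1-\theta_2-\pi^*\beta)=0$ plus invariance under the $\A$-action, which is exactly your descent-to-$X$ argument made slightly more explicit), and then deduce the identity $\rho_{x,\alpha_1}(\hg) - \rho_{x,\alpha_2}(\hg) = \beta(g(x)) - \beta(x)$, from which (1) follows by applying it to $\hg^n$ and taking the limit and (2) by integrating against $\mu$. The paper compresses these last steps into ``this immediately implies the proposition,'' but the content is identical.
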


\begin{remark}
  If $\beta \colon X \to \A$ is measurable, the condition $\displaystyle \int_{X} \bigl( \beta(g(x)) - \beta(x) \bigr) d\mu(x) = 0$ holds.
\end{remark}

\begin{proof}[Proof of Proposition $\ref{prop:alpha-independence}$]
  Let $\theta_j$ be a zero-cochain on $\hX$ for $\alpha_j$ as in Lemma \ref{lem:theta}.
  Since $\alpha_1 - \alpha_2 = d\beta$, we have $d(\theta_1 - \theta_2) = d(\pi^*\beta)$.
  Hence there exists a locally constant function $C \colon \hX \to \RR$ such that $\theta_1 - \theta_2 - \pi^* \beta = C$.
  This $C$ is in fact globally constant since the equality
  \[
    C(T_r(\hy)) = \theta_1(T_r(\hy)) - \theta_2(T_r(\hy)) - \pi^*\beta(T_r(\hy)) = \theta_1(\hy) - \theta_2(\hy) - \pi^*\beta(\hy) = C(\hy)
  \]
  holds for any $\hy \in \hX$ and $r \in \A$.
  Hence we obtain
  \[
    \rho_{x, \alpha_1}(\hg) - \rho_{x, \alpha_2}(\hg) = \beta(g(x)) - \beta(x),
  \]
  and this immediately implies the proposition.
\end{proof}

\begin{definition}\label{def:standard}
  Let $X$ be a topological space and $a \in \HHH^1(X;\A)$.
  A singular one-cocycle $\alpha$ representing $a$ is called \textit{standard} if there exists a zero-cochain $\theta \colon \hX \to \RR$ such that $\theta$ is continuous, $\pi^* \alpha = d\theta$, and
  \[
    \theta(T_r(\hx)) = \theta(\hx) + r
  \]
  for any $\hx \in \hX$ and $r \in \A$.
  Let $S_a$ be the set of standard cocycles of $a$.
\end{definition}

\begin{remark}\label{rem:existence_trans_numbers}
  For a standard representative $\alpha$ of $a$, the function $\rho_{x, \alpha}(\hg)$ is continuous with respect to $x$ for any $\hg \in \hG$.
  Hence, if we assume that $X$ is compact, the function $\rho_{x, \alpha}(\hg)$ is in $L^1(\mu)$ for any Borel probability measure on $X$.
  In particular, for $p(\hg)$-invariant Borel probability measure $\mu$, the mean translation number $\trot_{\mu, \alpha}(\hg)$ always exists and the local translation number $\trot_{x, \alpha}(\hg)$ exists $\mu$-a.e. 
\end{remark}

If the space $X$ is a smooth manifold, we can take $\alpha$ as a differential one-form and $\theta$ as a differential zero-form, that is, a smooth function on $\hX$.
Hence any cohomology class on a smooth manifold admits a standard representative.
More generally, the following lemma holds,
which was essentially proved in \cite[Lemma 2.1]{MR2854098}.

\begin{lemma}\label{lem:conti_theta}
  Let $X$ be a paracompact space which admits a universal covering.
  Then, for any $a \in \HHH^1(X;\A)$, there exists a standard representative.
  In particular, $S_a$ is non-empty.
\end{lemma}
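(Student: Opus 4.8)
The plan is to first construct the continuous equivariant cochain $\theta\colon \hX \to \RR$ directly, and then to read off a standard representative $\alpha$ from it. Since $X$ admits a universal covering and $\A$ carries the discrete topology, the flat principal $\A$-bundle $\pi\colon \hX \to X$ associated with $a$ is a covering space; in particular it is locally trivial. First I would fix an open cover $\{U_i\}_{i}$ of $X$ together with $\A$-equivariant trivializations $\psi_i\colon \pi^{-1}(U_i) \xrightarrow{\ \sim\ } U_i \times \A$, and let $\iota\colon \A \hookrightarrow \RR$ be the inclusion. Setting $\theta_i = \iota \circ \mathrm{pr}_\A \circ \psi_i \colon \pi^{-1}(U_i) \to \RR$ gives continuous local functions, each satisfying the slope-one condition $\theta_i(T_r(\hx)) = \theta_i(\hx) + r$.

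The key step is to glue the $\theta_i$ into a global $\theta$ with the same equivariance, and this is where paracompactness enters. I would choose a continuous, locally finite partition of unity $\{\rho_i\}$ subordinate to $\{U_i\}$ and put
\[
  \theta = \sum_i (\rho_i \circ \pi)\,\theta_i,
\]
each summand being extended by zero outside $\pi^{-1}(\mathrm{supp}\,\rho_i)$. This $\theta$ is continuous, and since $T_r$ preserves fibers (so $\rho_i \circ \pi$ is $T_r$-invariant) while every $\theta_i$ has slope one and $\sum_i \rho_i \equiv 1$, a direct computation gives $\theta(T_r(\hx)) = \theta(\hx) + r$ for all $\hx$ and $r$. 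The point is that although the local models $\theta_i$ differ on overlaps by the locally constant, $\A$-valued transition data, all of them share the same unit slope, so the convex combination dictated by the partition of unity preserves the equivariance exactly.

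It remains to produce $\alpha$ with $\pi^*\alpha = d\theta$ and $[\alpha] = a$. Because $\pi$ is a covering, every singular simplex of $X$ lifts to $\hX$; for a singular $1$-simplex $\sigma$ I would choose a lift $\tilde\sigma$ and set $\alpha(\sigma) = \theta(\tilde\sigma(1)) - \theta(\tilde\sigma(0))$. The slope-one equivariance of $\theta$ makes this independent of the chosen lift, and tautologically $\pi^*\alpha = d\theta$. Then $\pi^*(d\alpha) = d(d\theta) = 0$, and since $\pi^*$ is injective on cochains (every simplex lifts), $\alpha$ is a cocycle; evaluating $\alpha$ on a based loop $\gamma$ recovers the holonomy value in $\A$, so $[\alpha] = a$. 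By construction $\theta$ is continuous and satisfies Definition \ref{def:standard}, whence $\alpha \in S_a$ and $S_a \neq \emptyset$.

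The main obstacle is the gluing in the second paragraph: one must obtain a genuinely continuous global $\theta$ satisfying the equivariance on the nose, rather than merely up to a bounded or locally constant error, and this is exactly what the partition-of-unity averaging achieves, using that all local potentials have identical unit slope. Everything else — the passage from $\theta$ back to the cocycle $\alpha$ and the verification that $[\alpha]=a$ — is formal once the covering-space structure of $\hX$ is in hand.
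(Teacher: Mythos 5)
Your proof is correct, and its second half --- defining $\alpha$ on a singular $1$-simplex by lifting it to $\hX$ and taking the $\theta$-difference of the endpoints, using the slope-one equivariance for independence of the lift, and identifying $[\alpha]=a$ through the holonomy of based loops --- coincides with the paper's argument essentially verbatim. Where you genuinely diverge is in how the continuous equivariant $\theta$ is produced. The paper forms the associated bundle $\widetilde{X}\times_{\pi_1(X)}\RR$ with the \emph{usual} (non-discrete) topology on the fiber $\RR$, invokes paracompactness to obtain a continuous section $\sigma$ of this contractible-fiber bundle, defines $\theta$ as the fiberwise displacement from $\sigma$, and then notes that $\theta$ remains continuous on $\hX$, which sits inside the $\RR$-bundle with a finer topology. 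You never leave $\hX$: you glue the fiber coordinates of local trivializations by a partition of unity, and the point you correctly isolate --- that all local potentials have the same unit slope, so the affine combination is \emph{exactly} equivariant --- is what makes the gluing work. The two constructions are really two sides of the same coin: a continuous slope-one equivariant function on the associated $\RR$-bundle is equivalent data to a continuous section (its zero locus), and your partition-of-unity average is precisely the standard proof that an affine $\RR$-bundle over a paracompact base admits a section. What your route buys is self-containedness --- no auxiliary bundle, no appeal to the section-existence theorem for contractible fibers, no topology-change subtlety; what the paper's route buys is brevity and a direct link to the Gal--K\k{e}dra lemma it adapts. Both uses of paracompactness (partition of unity versus section existence) and of the universal covering (local triviality of $\hX$, respectively the construction of $\widetilde{X}\times_{\pi_1(X)}\RR$) are parallel, so the hypotheses are consumed in the same way.
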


\begin{proof}
  By abuse of notation, let $a \colon \pi_1(X) \to \A$ denote the holonomy homomorphism.
  When $\A = \ZZ$, we consider the homomorphism $a \colon \pi_1(X) \to \ZZ$ as an real-valued homomorphism by composition with the inclusion $\ZZ \to \RR$.
  Let $\RR \to \widetilde{X}\times_{\pi_1(X)} \RR \xrightarrow{\pi} X$ be the $\RR$-bundle associated to the holonomy homomorphism $a$, where $\RR$ is endowed with the usual topology.
  Since the fiber is contractible and $X$ is paracompact, there exists a continuous section $\sigma \colon X \to \widetilde{X}\times_{\pi_1(X)} \RR$.
  We define a map $\theta \colon \widetilde{X}\times_{\pi_1(X)} \RR \to \RR$ by
  \[
    [\widetilde{x}, r] = T_{\theta([\widetilde{x}, r])}(\sigma\pi([\widetilde{x}, r])).
  \]
  Then, the function $\theta$ satisfies $\theta(T_s([\widetilde{x}, r])) = \theta([\widetilde{x}, r]) + s$.
  Moreover, since the section $\sigma$ is continuous, so is $\theta$.
  The $\A$-bundle $\hX = \widetilde{X}\times_{\pi_1(X)} \A$ is a subset of $\widetilde{X}\times_{\pi_1(X)} \RR$ with a finer topology.
  Hence the map $\theta$ remains continuous on $\hX$.
  Moreover, there exists a singular one-cocycle $\alpha$ such that $d \theta = \pi^* \alpha$.
  Indeed, for paths $\hgg$ and $\hgg'$ on $\hX$ satisfying $\pi_*\hgg = \pi_*\hgg'$, there exists a real number $r \in \RR$ such that $\hgg = T_r \circ \hgg'$ since the fiber of $\hX$ has the discrete topology.
  Hence we have
  \begin{align*}
    \int_{\hgg} d \theta &= \theta (\hgg(1)) - \theta(\hgg(0)) = \theta(T_r(\hgg'(1))) - \theta(T_r(\hgg'(0)))\\
    &= \theta(\hgg'(1)) + r - \bigl(\theta(\hgg'(0)) + r \bigr) = \int_{\hgg'} d \theta.
  \end{align*}

  Finally, we show that the cocycle $\alpha$ represents the class $a$.
  It suffices to show that, for any loop $\gg$ in $X$, the value $\int_{\gamma} \alpha$ is equal to the holonomy of $\gg$.
  Let $\hgg \colon [0,1] \to \hX$ be a lift of $\gg$.
  Then we have
  \begin{align*}
    \int_{\gg} \alpha = \int_{\pi_* \hgg} \alpha = \int_{\hgg} \pi^* \alpha = \int_{\hgg} d \theta = \theta(\hgg(1)) - \theta(\hgg(0)),
  \end{align*}
  and hence $\alpha$ represents the class $a$.
\end{proof}

\begin{proposition}\label{prop:S_a-indep}
  Let $X$ be a compact space and $a \in \HHH^1(X;\A)$.
  Assume that $S_a$ is non-empty.
  Then, the local translation number $\trot_{x, \alpha}$ and the mean translation number $\trot_{\mu, \alpha}$ are independent of the choice of $\alpha \in S_a$.
\end{proposition}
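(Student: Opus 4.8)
The plan is to reduce everything to Proposition \ref{prop:alpha-independence} by showing that, for two standard representatives, the difference cochain $\beta$ is continuous and hence bounded. So I would start with $\alpha_1, \alpha_2 \in S_a$, together with their associated continuous zero-cochains $\theta_1, \theta_2 \colon \hX \to \RR$ supplied by Definition \ref{def:standard}. Since $\alpha_1$ and $\alpha_2$ both represent $a$, their difference is a coboundary, say $\alpha_1 - \alpha_2 = d\beta$ for a (real-valued) singular zero-cochain $\beta$ on $X$. Exactly as in the proof of Proposition \ref{prop:alpha-independence}, the function $\theta_1 - \theta_2 - \pi^*\beta$ is then a global constant $C$ on $\hX$.

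The crux of the argument is to upgrade this relation to genuine continuity of $\beta$ on $X$. Because $\alpha_1$ and $\alpha_2$ are standard, both $\theta_1$ and $\theta_2$ are continuous, so $\pi^*\beta = \theta_1 - \theta_2 - C$ is continuous on $\hX$; that is, $\beta \circ \pi$ is continuous. Since $\pi \colon \hX \to X$ is a principal $\A$-bundle projection with discrete structure group, it is a covering map, hence an open continuous surjection and in particular a quotient map; therefore continuity of $\beta \circ \pi$ forces $\beta$ to be continuous on $X$. (Equivalently, precomposing $\theta_1 - \theta_2 - C$ with a local section of $\pi$ exhibits $\beta$ as continuous near each point.) As $X$ is compact, $\beta$ is then bounded.

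With boundedness of $\beta$ in hand, both conclusions follow from Proposition \ref{prop:alpha-independence}. For the local translation number, $\beta$ is bounded on every orbit $\{ g^i(x) \}$, so part (1) gives $\trot_{x, \alpha_1}(\hg) = \trot_{x, \alpha_2}(\hg)$ whenever both are defined; moreover, applying to $\hg^n$ the identity $\rho_{x, \alpha_1}(\hg^n) - \rho_{x, \alpha_2}(\hg^n) = \beta(g^n(x)) - \beta(x)$ established in that proof and dividing by $n$, the two defining limits differ by a term tending to $0$, so one exists exactly when the other does, which covers the existence part of the assertion. For the mean translation number, $\beta$ is continuous and bounded, hence measurable and $\mu$-integrable for any Borel probability measure; the $g$-invariance of $\mu$ gives $\int_X \bigl( \beta(g(x)) - \beta(x) \bigr) \, d\mu(x) = 0$, so part (2) yields $\trot_{\mu, \alpha_1}(\hg) = \trot_{\mu, \alpha_2}(\hg)$, while existence of these numbers is guaranteed by Remark \ref{rem:existence_trans_numbers}.

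The main obstacle is the middle step: passing from continuity of $\beta \circ \pi$ on $\hX$ to continuity, and hence (by compactness) boundedness, of $\beta$ on $X$. This is precisely where the standardness hypothesis enters essentially, since for a general representative $\alpha$ the cochain $\theta$ need not be continuous, and then $\beta$ carries no topological regularity and may fail to be bounded; in that case the translation numbers can genuinely depend on $\alpha$, so the compactness of $X$ and the continuity built into $S_a$ are both indispensable.
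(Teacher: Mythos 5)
Your proof is correct and follows essentially the same route as the paper: express $\theta_1 - \theta_2 = \pi^*\beta + C$ as in Proposition \ref{prop:alpha-independence}, deduce continuity (hence, by compactness, boundedness) of $\beta$ from the continuity of the standard cochains $\theta_1, \theta_2$, and then invoke Proposition \ref{prop:alpha-independence}. Your added details --- the quotient-map argument for pushing continuity of $\pi^*\beta$ down to $\beta$, and the observation that existence of one translation number implies existence of the other --- are correct elaborations of steps the paper leaves implicit.
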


\begin{proof}
  For elements $\alpha_1$ and $\alpha_2$ of $S_a$, we take zero-cochains $\theta_1$ and $\theta_2$, respectively, as in the definition of $S_a$.
  Then there exists a zero-cochain $\beta \colon \hX \to \RR$ and a constant function $C$ such that
  \[
    \theta_1 - \theta_2 = \pi^*\beta + C
  \]
  by the same argument in the proof of Proposition \ref{prop:alpha-independence}.
  Since $\theta_1$ and $\theta_2$ are continuous, so is $\beta \colon X \to \RR$.
  Hence, Proposition \ref{prop:alpha-independence} and the compactness of $X$ imply the proposition.
\end{proof}

\begin{remark}\label{rem:a-dep_trans_number}
  Let $X$ be a compact space and $a \in \HHH^1(X;\A)$ a cohomology class which admits a standard representative.
  For a point $x$ and a Borel probability measure $\mu$, we define $\trot_{x}$ and $\trot_{\mu}$ by
  \[
    \trot_{x} = \trot_{x, \alpha}  \ \ \text{ and }  \ \  \trot_{\mu} = \trot_{\mu,\alpha}
  \]
  for some $\alpha \in S_a$.
  These translation numbers are well-defined and $a$-dependent notions by Proposition \ref{prop:S_a-indep}, although they will not appear in the rest of the paper.
\end{remark}

\subsection{Relation to the homological translation vector}\label{subsec:trans_vector}
Let us recall the definition of the homological translation number (see \cite{MR1325916} and \cite{MR1444450} for details).
Let $X$ be a closed manifold and $\{ g_t \}_{0 \leq t \leq 1}$ an isotopy from the identity $\id_X = g_0$ to a homeomorphism $g = g_1$.
Let $\tg$ denote the isotopy class of $\{ g_t \}$ relative to the fixed endpoints.
For a point $x$ in $X$, the homological translation vector $h_{x, \tg}$ of $x$ is defined as an element of
\[
  \HHH_1(X;\RR) \cong \Hom([X,S^1],\RR),
\]
where $[X,S^1]$ is the homotopy set of continuous maps $X \to S^1$.
For a continuous map $\varphi \colon X \to S^1 = \RR/\ZZ$ and a path $\gamma \colon [0,1] \to X$, let $\varphi_{\gamma} \colon [0,1] \to \RR$ be a continuous lift of the map $t \mapsto \varphi(\gamma(t))$, and we set
\[
  \Delta_{\varphi}(\gamma) = \varphi_{\gamma}(1) - \varphi_{\gamma}(0).
\]
Then, the homological translation vector $h_{x, \tg} \colon [X,S^1] \to \RR$ is defined by
\[
  h_{x, \tg} ([\varphi]) = \lim_{n \to \infty} \frac{1}{n} \Delta_{\varphi}(\{ g_t(x) \}\ast \{ g_t \circ g(x) \} \ast \cdots \ast \{ g_t \circ g^{n-1}(x) \})
\]
if the limit exists.
Here $\ast$ denotes the concatenation of the paths.
For a $g$-invariant Borel probability measure $\mu$, the mean translation number $h_{\mu, \tg} \in H_1(X;\RR)$ is defined by
\[
  h_{\mu, \tg}([\varphi]) = \int_{X} \Delta_{\varphi}(\{ g_t(x) \}) \, d\mu(x).
\]

For a continuous map $\varphi \colon X \to S^1$, let $\alpha_{\varphi}$ be the one-cocycle on $X$ induced by $\Delta_{\varphi}$.
Note that this induces the isomorphism $[X, S^1] \cong \mathrm{Im}(\HHH^1(X;\ZZ) \to \HHH^1(X;\RR) )$.
We set $a = [\alpha_{\varphi}] \in \HHH^1(X;\RR)$.
For an isotopy $\{ g_t \}_{0 \leq t \leq 1}$, let $\{ \hg_t \colon \hX \to \hX \}_{0 \leq t \leq 1}$ be the lift from $\id_{\hX} = \hg_0$, and we set $\hg = \hg_1$.
\begin{proposition}
  The equalities
  \[
    h_{x, \tg}([\varphi]) = \trot_{x, \alpha_{\varphi}}(\hg) \ \ \text{ and } \ \ h_{\mu, \tg}([\varphi]) = \trot_{\mu, \alpha_{\varphi}}(\hg)
  \]
  hold.
\end{proposition}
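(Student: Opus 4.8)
The plan is to compute both sides through a single conveniently chosen cochain $\theta$; since the function $\rho_{x, \alpha_\varphi}$ does not depend on the choice of $\theta$ (Lemma \ref{lem:theta}), it is enough to exhibit one valid $\theta$ and express everything in terms of it. Because the holonomy of the pullback $\varphi^*\RR$ of the universal cover $\RR \to S^1$ is exactly the homomorphism $\varphi_* \colon \pi_1(X) \to \ZZ$ representing $a$, I would identify $\hX$ with
\[
  \varphi^*\RR = \{ (x, s) \in X \times \RR \colon \varphi(x) = s \bmod \ZZ \},
\]
with projection $\pi(x,s) = x$ and fiber action $T_r(x,s) = (x, s+r)$, and set $\theta(x,s) = s$. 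Then $\theta(T_r(\hx)) = \theta(\hx) + r$ is immediate, and evaluating on a singular $1$-simplex $\hgg$ gives $\int_{\hgg} d\theta = \theta(\hgg(1)) - \theta(\hgg(0)) = \Delta_\varphi(\pi \circ \hgg) = \int_{\pi \circ \hgg} \alpha_\varphi$, so that $d\theta = \pi^*\alpha_\varphi$; here the key point is that $\theta$ is by construction a continuous lift of $\varphi \circ \pi \colon \hX \to S^1$ through $\RR \to S^1$, so its increment along any path computes $\Delta_\varphi$ of the projected path. Consequently $\rho_{x, \alpha_\varphi}(\hg) = \theta(\hg(\hx)) - \theta(\hx)$ for any lift $\hx$ of $x$.

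Next I would lift the orbit concatenation. Using the lifted isotopy $\{\hg_t\}$ with $\hg_0 = \id_{\hX}$ and $\hg = \hg_1$, which satisfies $\pi \circ \hg_t = g_t \circ \pi$, I claim the lift of the path $\{g_t(x)\} \ast \{g_t \circ g(x)\} \ast \cdots \ast \{g_t \circ g^{n-1}(x)\}$ starting at $\hx$ is the concatenation whose $i$-th segment is $t \mapsto \hg_t(\hg^i(\hx))$. Indeed this segment projects to $t \mapsto g_t(g^i(x))$, starts at $\hg^i(\hx)$, and ends at $\hg_1(\hg^i(\hx)) = \hg^{i+1}(\hx)$, so the segments concatenate continuously from $\hx$ to $\hg^n(\hx)$. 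Applying $\theta$ and using that $\theta$ lifts $\varphi \circ \pi$, the function $\theta$ along this lifted path is a real lift of $\varphi$ composed with the original path, whence
\[
  \Delta_\varphi\bigl( \{g_t(x)\} \ast \cdots \ast \{g_t \circ g^{n-1}(x)\} \bigr) = \theta(\hg^n(\hx)) - \theta(\hx) = \rho_{x, \alpha_\varphi}(\hg^n).
\]

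Finally, dividing by $n$ and letting $n \to \infty$ identifies $h_{x, \tg}([\varphi])$ with $\lim_{n} \rho_{x, \alpha_\varphi}(\hg^n)/n = \trot_{x, \alpha_\varphi}(\hg)$ (the two limits exist simultaneously and coincide), proving the first equality; specializing the displayed identity to $n = 1$ gives $\Delta_\varphi(\{g_t(x)\}) = \rho_{x, \alpha_\varphi}(\hg)$ pointwise, and integrating against $\mu$ yields $h_{\mu, \tg}([\varphi]) = \int_X \rho_{x, \alpha_\varphi}(\hg)\,d\mu = \trot_{\mu, \alpha_\varphi}(\hg)$. I expect the only real subtlety to be the bookkeeping in the previous paragraph --- matching the two a priori different notions of lift (the increment $\Delta_\varphi$ defined through $\RR \to S^1$ versus the chosen cochain $\theta$) and verifying that the lifted isotopy $\{\hg_t\}$ reassembles the orbit concatenation into a genuine lift based at $\hx$ --- while the passage to the limit and the integration are then immediate.
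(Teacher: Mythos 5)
Your proposal is correct and takes essentially the same route as the paper: both arguments reduce the statement to the identity $\Delta_{\varphi}(\{ g_t(x) \}) = \theta(\hg(\hx)) - \theta(\hx) = \rho_{x, \alpha_{\varphi}}(\hg)$, obtained by lifting the isotopy orbit to $\hX$ and using $d\theta = \pi^*\alpha_{\varphi}$, and then pass to the limit (resp.\ integrate) for the local (resp.\ mean) translation number. Your explicit model $\varphi^*\RR$ with $\theta(x,s) = s$, and your spelled-out lift of the $n$-fold concatenation, simply instantiate concretely the cochain that the paper takes abstractly from Lemma \ref{lem:theta} and make explicit the iteration step that the paper's shorter proof leaves implicit.
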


\begin{proof}
  We take a zero-cochain $\theta_{\varphi}$ for $\alpha_{\varphi}$ by Lemma \ref{lem:theta}.
  Since $d\theta_{\varphi} = \pi^* \alpha_{\varphi}$, we have
  \begin{align*}
    \Delta_{\varphi}(\{ g_t(x) \}) = \int_{\{ g_t(x) \}} \alpha_{\varphi} = \int_{\{ \hg_t(\hx) \}} d\theta_{\varphi} = \theta_{\varphi}(\hg(\hx)) - \theta_{\varphi}(\hx).
  \end{align*}
  This implies the proposition.
\end{proof}


\section{On the Gal--K\k{e}dra cocycle}\label{sec:app}

The goal of this section is to show Theorems \ref{thm_appendix} and \ref{thm:app_meas_pres}.

\subsection{Preliminary on group cohomology}\label{subsec:group_coh}
In this subsection, we review group cohomology (see \cite{brown82} for details).
For a group $G$ and a trivial $G$-module $\Z$, a \textit{group $p$-cochain of $G$ with coefficients in $\Z$} is a function from the $n$-fold product $G^n$ to $\Z$.
Let $C_{\grp}^n(G;\Z)$ denote the module of group $n$-cochains.
The coboundary map $\delta \colon C_{\grp}^n(G;\Z) \to C_{\grp}^{n+1}(G;\Z)$ is defined by
\begin{align*}
  \delta c (g_1, \dots, g_{n+1}) = & c(g_2, \dots, g_{n+1}) + \sum_{i = 1}^{n}(-1)^i c(g_1, \dots, g_i g_{i+1}, \dots, g_{n+1}) \\
  & +(-1)^{n+1}c(g_1, \dots, g_n)
\end{align*}
for $n > 0$ and $\delta = 0$ for $n = 0$.
The cohomology of the cochain complex $(C_{\grp}^*(G;\Z),\delta)$ is  denoted by $H_{\grp}^*(G;\Z)$ and called the {\it group cohomology of $G$}.
By definition, we have $H_{\grp}^0(G;\Z) \cong \Z$ and $H_{\grp}^1(G;\Z) \cong \Hom(G,\Z)$, where $\Hom(G,\Z)$ denotes the module of $\Z$-valued homomorphisms on $G$.

The second cohomology group $H_{\grp}^2(G;\Z)$ is closely related to the central $\Z$-extensions of $G$.
An exact sequence $1 \to \Z \xrightarrow{i} E \xrightarrow{p} G \to 1$ is called a \textit{central $\Z$-extension of $G$} if the image $i(\Z)$ is contained in the center of $E$.

\begin{theorem}[{\cite[(3.12) Theorem]{brown82}}]\label{thm:app_cent_ext_coh}
  The second group cohomology $\HHH_{\grp}^2(G;\Z)$ with coefficients in $\Z$ is bijective to the set of equivalence classes of central $\Z$-extensions of $G$;
  \begin{align}\label{sec_coh_cent}
    \HHH_{\grp}^2(G;\Z) \cong \{ \text{central $\Z$-extensions of $G$}\} / \{ \text{splitting extensions} \}.
  \end{align}
\end{theorem}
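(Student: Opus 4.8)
The statement is the classical correspondence between second group cohomology and central extensions, so the plan is to exhibit two explicit constructions and verify that they are mutually inverse on the relevant equivalence classes.

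First I would pass from an extension to a cocycle. Given a central extension $1 \to \Z \xrightarrow{i} E \xrightarrow{p} G \to 1$, choose a set-theoretic section $s \colon G \to E$ of $p$ normalized so that $s(1) = 1$. Since $p\bigl(s(g_1)s(g_2)s(g_1 g_2)^{-1}\bigr) = 1$, this element lies in $\ker p = i(\Z)$, and there is a unique $f(g_1, g_2) \in \Z$ with $i(f(g_1, g_2)) = s(g_1)s(g_2)s(g_1 g_2)^{-1}$. Expanding the associativity law $(s(g_1)s(g_2))s(g_3) = s(g_1)(s(g_2)s(g_3))$ in terms of $f$, and using that $i(\Z)$ is central to move the factors $i(f(\cdot,\cdot))$ freely past the $s(g)$, produces exactly the identity $\delta f = 0$ for the coboundary $\delta$ defined above. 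This is where centrality is indispensable: the trivial action of $G$ on $\Z$ is what makes the relevant terms commute.

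Next I would check that the class $[f] \in \HHH_{\grp}^2(G;\Z)$ is well defined and is an invariant of the equivalence class of the extension. Any two normalized sections differ by $s'(g) = i(c(g)) s(g)$ for a map $c \colon G \to \Z$ with $c(1) = 0$, and a short computation (again using centrality) gives $f' = f + \delta c$; equivalences of extensions are handled in the same manner. Conversely, from a cocycle $f$, which I may assume normalized so that $f(g,1) = f(1,g) = 0$ within its class, I would build $E_f = \Z \times G$ with product $(a, g)(b, h) = (a + b + f(g,h), gh)$. The cocycle identity $\delta f = 0$ is precisely associativity, $(0,1)$ is the unit, and $\Z \times \{1\}$ is a central subgroup, so $i(a) = (a,1)$ and $p(a,g) = g$ define a central extension whose associated cocycle is again $f$.

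The main work, and the only genuine obstacle, is the bookkeeping showing that these two passages are mutually inverse up to equivalence and that they match the quotient in the statement. For the last point, the key observation is that the section $s$ can be chosen to be a homomorphism exactly when $f$ is a coboundary; hence the splitting extensions are precisely those with $[f] = 0$, which identifies $\HHH_{\grp}^2(G;\Z)$ with central extensions of $G$ modulo the splitting ones.
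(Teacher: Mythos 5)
Your proposal is correct, and it is the classical factor-set argument: extract a normalized $2$-cocycle from a set-theoretic section using centrality, show its class is independent of the section and invariant under equivalence of extensions, reconstruct an extension $E_f = \Z \times G$ with the twisted multiplication from a normalized cocycle, and identify the split extensions with the coboundaries. The paper gives no proof of this statement at all --- it is quoted directly from Brown's book as \cite[(3.12) Theorem]{brown82} --- and your argument is essentially the proof found in that reference, so there is nothing in the paper to diverge from.
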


For a central $\Z$-extension $E$, the corresponding cohomology class $e(E)$ under bijection (\ref{sec_coh_cent}) is called the \textit{Euler class of the central extension $E$}.
One of the cocycles representing the Euler class $e(E)$ is given as the following proposition:
\begin{proposition}[{\cite[Proposition 1]{Moriyoshi16}}]\label{prop:app_euler_cocycle_conn}
  Let $\Z$ and $\W$ be trivial $G$-modules, $0 \to \Z \xrightarrow{i} E \xrightarrow{p} G \to 1$ be a central extension, and $f \colon \Z \to \W$ a homomorphism.
  Let $F \colon E \to \W$ be a function whose restriction $F|_{\Z}$ is equal to $f$, where we identify $i(\Z)$ with $\Z$.
  Then, there exists a cocycle $c \in C_{\grp}^2(G;\W)$ such that the pullback $p^*c$ is equal to $-\delta F$.
  Moreover, the cocycle $c$ represents the class $f_* e(E) \in \HHH_{\grp}^2(G;\W)$, where $f_* \colon \HHH_{\grp}^2(G;\Z) \to \HHH_{\grp}^2(G;\W)$ is the change of coefficients homomorphism.
\end{proposition}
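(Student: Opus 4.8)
The plan is to pass from $E$ down to $G$ by means of a set-theoretic section of $p$, and to match the resulting descended cochain with the push-forward of the standard two-cocycle attached to the extension. First I would set $\beta := -\delta F \in C_{\grp}^2(E;\W)$ and record two facts about it. It is a cocycle, since $\delta\beta = -\delta\delta F = 0$. More to the point, it descends through $p$: expanding $\delta F(e_1,e_2) = F(e_2) - F(e_1 e_2) + F(e_1)$ and using that $i(\Z)$ is central together with the compatibility $F(i(z)e) = f(z) + F(e)$ (which records that $F$ extends the homomorphism $f$ in a $\Z$-equivariant fashion), one checks that $\delta F$ is unchanged when either argument is multiplied by an element of $i(\Z)$. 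Hence $\beta(e_1,e_2)$ depends only on $p(e_1)$ and $p(e_2)$, so there is a unique function $c \colon G \times G \to \W$ with $p^*c = \beta = -\delta F$. As $p$ is surjective, $p^*$ is injective, and from $p^*(\delta c) = \delta(p^*c) = -\delta\delta F = 0$ I conclude $\delta c = 0$, so $c \in C_{\grp}^2(G;\W)$ is a cocycle. This gives the first assertion.

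To identify the class of $c$, I would fix a section $s \colon G \to E$ of $p$ with $s(1)=1$ and define $\tau \colon G \times G \to \Z$ by $i(\tau(g_1,g_2)) = s(g_1)s(g_2)s(g_1g_2)^{-1}$; the right-hand side lies in $\Ker p = i(\Z)$, and the computation underlying the bijection of Theorem \ref{thm:app_cent_ext_coh} shows that $\tau$ is a two-cocycle representing $e(E)$, so $f\circ\tau$ represents $f_*e(E)$. Evaluating $c$ on section values and substituting $s(g_1)s(g_2) = i(\tau(g_1,g_2))\,s(g_1g_2)$ yields
\[
  c(g_1,g_2) = F\bigl(s(g_1)s(g_2)\bigr) - F(s(g_1)) - F(s(g_2)) = f(\tau(g_1,g_2)) + \delta b(g_1,g_2),
\]
where $b(g) := -F(s(g))$ and I have used $F(i(z)e) = f(z) + F(e)$ once more. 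Thus $c = f\circ\tau + \delta b$, whence $[c] = [f\circ\tau] = f_*e(E)$, which is the second assertion.

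I expect the descent of $-\delta F$ to be the only delicate point. A priori $-\delta F$ is merely a cochain on $E$, and what forces it to be constant along the fibres of $p$ is precisely the interplay between the centrality of $i(\Z)$ and the requirement that $F$ extend the homomorphism $f$; indeed it is exactly the equivariance $F(i(z)e)=f(z)+F(e)$ that collapses the central contributions in $\delta F$. Once descent is in hand, both the cocycle property of $c$ and the identification of its cohomology class reduce to routine manipulations with the coboundary $\delta$ and the change-of-coefficients map $f_*$.
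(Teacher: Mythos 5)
Your route is the standard proof of this fact, which the paper itself does not supply (the proposition is imported from \cite[Proposition 1]{Moriyoshi16}): descend $-\delta F$ along $p$, then identify the descended cocycle with $f\circ\tau$ up to the coboundary of $b = -F\circ s$, where $\tau$ is the factor set of a normalized section $s$. Granting the equivariance identity you use, every step checks out: the descent computation, the injectivity of $p^*$ on cochains (so that $\delta c = 0$ follows from $p^*\delta c = 0$), and the identity $c = f\circ\tau + \delta b$, which gives $[c] = f_*e(E)$ in the conventions of Theorem \ref{thm:app_cent_ext_coh}.

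The genuine issue is your treatment of $F(i(z)e) = f(z) + F(e)$. You invoke it as ``the compatibility \dots which records that $F$ extends the homomorphism $f$,'' i.e., as if it were part of the hypothesis; but the stated hypothesis is only $F(i(z)) = f(z)$, which is strictly weaker. The distinction is not cosmetic: with only $F|_{\Z} = f$, the existence assertion of the proposition is false. For the trivial extension $E = \Z \times G$ with $f = \id_{\Z}$, take $F(z,g) = z$ when $g = 1$ and $F(z,g) = 0$ otherwise; then $-\delta F\bigl((z_1,g),(z_2,g^{-1})\bigr) = z_1 + z_2$ for $g \neq 1$, which is not a function of the images under $p$, so no $c$ with $p^*c = -\delta F$ exists. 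Hence the equivariance is a genuinely additional hypothesis, and your proof should state it as such; it is precisely the form in which the proposition is applied in this paper, since $\rho_{x,\alpha}(T_r\hg) = \theta(T_r\hg(\hx)) - \theta(\hx) = r + \rho_{x,\alpha}(\hg)$ by Lemma \ref{lem:theta}. One mitigating observation: the ``Moreover'' clause --- the only part actually used in the proof of Theorem \ref{thm:app_gal_kedra_euler}, where $p^*\mathfrak{G}_{x,\alpha} = -\delta\rho_{x,\alpha}$ is verified by direct computation --- is correct under the literal hypothesis, because once a $c$ with $p^*c = -\delta F$ is assumed to exist, equivariance comes for free: descent of $\delta F$ forces $F(i(z)e) - F(e)$ to be independent of $e$, hence equal to $F(i(z)) - F(1) = f(z)$, using $F(1) = f(0) = 0$. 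So your section computation does prove the statement in the form in which the paper uses it; what is missing is only an explicit account of where the equivariance comes from --- either as an added hypothesis for the existence claim, or derived as above when existence is assumed.
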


\begin{remark}\label{rem:app_hom_ext_trivial}
  For a central extension $1 \to \Z \to E \to G \to 1$ and a homomorphism $f \colon \Z \to \W$, the class $f_*e(E)$ is trivial if and only if the homomorphism $f \colon \Z \to \W$ extends to a homomorphism $F \colon E \to \W$ by Proposition \ref{prop:app_euler_cocycle_conn}.
  In particular, the Euler class $e(E)$ is trivial if and only if the identity homomorphism $\id_{\Z}$ extends to a homomorphism $F \colon E \to \Z$.
\end{remark}

\subsection{Gal--K\k{e}dra cocycle}
For $a \in \HHH^1(X;\A)$, let $\G = \Homeo(X, a)$ be the group of $a$-preserving homeomorphisms of $X$.
Gal and K\k{e}dra defined in \cite{MR2854098} a group two-cocycle $\mathfrak{G}_{x, \alpha}$ on $\G$, which we call the \emph{Gal--K\k{e}dra cocycle}.
For a point $x \in X$ and a representative $\alpha$ of $a \in \HHH^1(X;\A)$, the cocycle 
is defined by
\[
  \mathfrak{G}_{x, \alpha}(g,h) = \int_x^{h(x)} g^* \alpha - \alpha
\]
for $g, h \in \G$.
Here, the symbol $\int_x^{h(x)} \sigma$ denotes the pairing of a singular one-cochain $\sigma$ and a path from $x$ to $h(x)$.
Since $g^* \alpha - \alpha$ is a coboundary, $\mathfrak{G}_{x, \alpha}(g,h)$ is independent of the choice of paths from $x$ to $h(x)$.
This group two-cochain $\mathfrak{G}_{x, \alpha}$ is a cocycle, and its cohomology class is independent of the choice of $x$ and $\alpha$ (see \cite{MR2854098} for details; see also \cite{ismagilov_losik_michor06}).

Note that the projection $\hG \to \Homeo(X)$ factors through $\G$.
Moreover, the induced map $p \colon \hG \to \G$ is surjective.
In particular, we have a central extension
\begin{align}\label{cent_ext_hG}
  0 \to \A \to \hG \xrightarrow{p} \G \to 1.
\end{align}

In \cite{1105.0825}, the following is shown.

\begin{theorem}[{\cite[Theorem 5.1]{1105.0825}}]\label{thm:app_gal_kedra_euler}
  The cocycle $\mathfrak{G}_{x, \alpha}$ represents the Euler class $e(\hG)$ of central extension (\ref{cent_ext_hG}).
\end{theorem}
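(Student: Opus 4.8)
The plan is to apply the Euler-cocycle formula of Proposition \ref{prop:app_euler_cocycle_conn} to central extension (\ref{cent_ext_hG}), taking $\Z = \A$, $\W = \RR$, $E = \hG$, $G = \G$, and $f \colon \A \to \RR$ the canonical inclusion. The natural candidate for a function $F \colon \hG \to \RR$ whose restriction to the kernel is $f$ is the function $\rho_{x,\alpha}$ itself. First I would check this restriction property: for the kernel element $T_r$ with $r \in \A$, property (\ref{eq:fiber_direction}) of Lemma \ref{lem:theta} gives
\[
  \rho_{x,\alpha}(T_r) = \theta(T_r(\hx)) - \theta(\hx) = r,
\]
so $\rho_{x,\alpha}$ restricts to the inclusion $\A \hookrightarrow \RR$ on $i(\A)$, as required.

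The key step is then to verify the coboundary identity $-\delta \rho_{x,\alpha} = p^*\mathfrak{G}_{x,\alpha}$. Using the relation $\rho_{x,\alpha}(\hg\hh) = \rho_{h(x),\alpha}(\hg) + \rho_{x,\alpha}(\hh)$ obtained in the proof of Proposition \ref{prop:trans_number_hom} (with $h = p(\hh)$), a direct computation gives
\[
  \delta\rho_{x,\alpha}(\hg,\hh) = \rho_{x,\alpha}(\hh) - \rho_{x,\alpha}(\hg\hh) + \rho_{x,\alpha}(\hg) = \rho_{x,\alpha}(\hg) - \rho_{h(x),\alpha}(\hg),
\]
so that $-\delta\rho_{x,\alpha}(\hg,\hh) = \rho_{h(x),\alpha}(\hg) - \rho_{x,\alpha}(\hg)$ depends only on $g = p(\hg)$ and $h = p(\hh)$.

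It remains to identify this with the Gal--K\k{e}dra cocycle. I would fix a path $c$ from $x$ to $h(x)$ in $X$ and its lift $\hat c$ to $\hX$ starting at $\hx$, writing $\hx' = \hat c(1)$ for the resulting lift of $h(x)$. The relation $d\theta = \pi^*\alpha$ gives $\int_c \alpha = \theta(\hx') - \theta(\hx)$, while $\hg \circ \hat c$ is a lift of $g(c)$ from $\hg(\hx)$ to $\hg(\hx')$, whence $\int_{g(c)}\alpha = \theta(\hg(\hx')) - \theta(\hg(\hx))$. Since $\int_c g^*\alpha = \int_{g(c)}\alpha$, subtracting yields
\[
  \mathfrak{G}_{x,\alpha}(g,h) = \int_c (g^*\alpha - \alpha) = \bigl(\theta(\hg(\hx')) - \theta(\hx')\bigr) - \bigl(\theta(\hg(\hx)) - \theta(\hx)\bigr) = \rho_{h(x),\alpha}(\hg) - \rho_{x,\alpha}(\hg),
\]
using that $\rho_{h(x),\alpha}$ is independent of the chosen lift $\hx'$. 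Combining with the previous paragraph gives $p^*\mathfrak{G}_{x,\alpha} = -\delta\rho_{x,\alpha}$, and Proposition \ref{prop:app_euler_cocycle_conn} then shows that $\mathfrak{G}_{x,\alpha}$ represents $f_* e(\hG)$. When $\A = \RR^\delta$ the map $f$ is the identity on the underlying abelian group, so this is precisely $e(\hG)$; when $\A = \ZZ$ it is the image of the integral Euler class under the change-of-coefficients map $\HHH_{\grp}^2(\G;\ZZ) \to \HHH_{\grp}^2(\G;\RR)$, consistent with the real-valued nature of $\mathfrak{G}_{x,\alpha}$.

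The main obstacle I anticipate is purely the bookkeeping in the path-lifting computation: one must keep track of the fibers in which the various lifts land (note that $\hg(\hx)$ lies over $g(x)$, not over $x$) and appeal to the independence of $\rho_{\cdot,\alpha}$ on the choice of lift in order to close the identification. Once the relation $p^*\mathfrak{G}_{x,\alpha} = -\delta\rho_{x,\alpha}$ is established, the conclusion is immediate from the general Euler-cocycle formula, so no further quasimorphism or limit arguments are needed.
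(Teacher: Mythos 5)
Your proposal is correct and follows essentially the same route as the paper's proof: verify that $\rho_{x,\alpha}$ restricts to the identity on the kernel $\A$, establish $p^*\mathfrak{G}_{x,\alpha} = -\delta\rho_{x,\alpha}$ by lifting a path from $x$ to $h(x)$ and using $d\theta = \pi^*\alpha$ together with $T_r$-equivariance, then invoke Proposition \ref{prop:app_euler_cocycle_conn}. The only cosmetic difference is that you organize the fiber bookkeeping through the relation $\rho_{x,\alpha}(\hg\hh) = \rho_{h(x),\alpha}(\hg) + \rho_{x,\alpha}(\hh)$ and lift-independence of $\rho_{\cdot,\alpha}$, where the paper manipulates $\theta$ and $T_r$ explicitly; your closing remark on the change-of-coefficients map when $\A = \ZZ$ is if anything more careful than the paper's statement.
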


To see the relation between the cocycle $\mathfrak{G}_{x, \alpha}$ and the local rotation number (or the function $\rho_{x, \alpha}$), we state a proof of Theorem \ref{thm:app_gal_kedra_euler} using $\rho_{x, \alpha}$.

\begin{proof}[Proof of Theorem \textup{\ref{thm:app_gal_kedra_euler}}]
  For any $r \in \A$, we have
  \[
    \rho_{x, \alpha}(T_r) = \theta(T_r(\hx)) - \theta(\hx) = \theta(\hx) + r - \theta(\hx) = r
  \]
  by Lemma \ref{lem:theta}.
  Hence the restriction $\rho_{x, \alpha}|_{\A}$ is equal to the identity homomorphism $\id_{\A}$.

  For any $\hg, \hh \in \hG$, we have
  \begin{align*}
    -\delta \rho_{x, \alpha} (\hg, \hh) &= \theta(\hg \hh(\hx)) - \theta(\hx) - \bigl(\theta(\hg(\hx))- \theta(\hx) + \theta(\hh(\hx)) - \theta(\hx)\bigr) \\
    &= \theta(\hg \hh (\hx)) - \theta(\hh (\hx)) - \bigl( \theta(\hg(\hx)) - \theta(\hx) \bigr).
  \end{align*}
  We set $p(\hg) = g$ and $p(\hh) = h$, then we have
  \begin{align*}
    p^*\mathfrak{G}_{x, \alpha}(\hg, \hh) = \mathfrak{G}_{x, \alpha}(g,h) = \int_{x}^{h(x)} g^* \alpha - \alpha.
  \end{align*}
  Let $\gamma \colon [0,1] \to X$ be a path satisfying $\gamma(0) = x$ and $\gamma(1) = h(x)$.
  We take the lift $\hgg \colon [0,1] \to \hX$ of $\gamma$ satisfying $\hgg(0) = \hx$.
  Then we obtain
  \begin{align*}
    \int_{x}^{h(x)} g^* \alpha - \alpha &= \int_{\gamma} g^* \alpha - \alpha = \int_{\pi(\hgg)} g^*\alpha - \alpha = \int_{\hgg} \pi^* g^* \alpha - \pi^* \alpha \\
    &= \int_{\hgg} \hg^* \pi^* \alpha - \pi^* \alpha = \int_{\hgg} \hg^* d \theta - d \theta = \int_{\hgg}d(\hg^* \theta - \theta) \\
    &= \int_{\partial \hgg} \hg^* \theta - \theta = \theta(\hg(\hgg(1))) - \theta(\hgg(1)) - \bigl( \theta(\hg(\hgg(0))) - \theta(\hgg(0)) \bigr) \\
    &= \theta(\hg(\hgg(1))) - \theta(\hgg(1)) - \bigl( \theta(\hg(\hx)) - \theta(\hx) \bigr).
  \end{align*}
  Since $\hgg(1)$ is in the fiber on $h(x)$, there exists a number $r \in \A$ such that $\hgg(1) = T_r(\hh(\hx))$.
  Hence we obtain
  \begin{align*}
    \theta(\hg(\hgg(1))) - \theta(\hgg(1)) &= \theta(\hg(T_r(\hh(\hx)))) - \theta(T_r(\hh(\hx))) \\
    &= \theta(T_r(\hg\hh(\hx))) - \theta(T_r(\hh(\hx)))\\
    &= \theta(\hg\hh(\hx)) + r - \bigl(\theta(\hh(\hx)) + r \bigr) = \theta(\hg\hh(\hx)) - \theta(\hh(\hx)).
  \end{align*}
  This implies $p^* \mathfrak{G}_{x, \alpha} = - \delta \rho_{x, \alpha}$.
  By Proposition \ref{prop:app_euler_cocycle_conn}, we obtain $[\mathfrak{G}_{x, \alpha}] = e(\hG)$.
\end{proof}

\begin{remark}
  \begin{enumerate}[$(1)$]
    \item In the proof of Theorem \ref{thm:app_gal_kedra_euler}, we showed that the equality $p^* \mathfrak{G}_{x, \alpha} = - \delta \rho_{x, \alpha}$ holds.
    Let $\G^b$ be the subgroup of $\G$ on which the cocycle $\mathfrak{G}_{x, \alpha}$ is bounded and $\hG^b$ the preimage of $\G^b$ with respect to the projection $\hG \to \G$.
    The equality $p^* \mathfrak{G}_{x, \alpha} = - \delta \rho_{x, \alpha}$ implies that the function $\rho_{x, \alpha}$ is a quasimorphism on $\hG^b$.
    Hence, the local translation number gives rise to a homogeneous quasimorphism
    \[
      \trot_{x, \alpha} \colon \hG^b \to \RR.
    \]
    (See \cite{Cal} for basics of quasimorphisms.)
    \item In general, the cocycle $\mathfrak{G}_{x, \alpha}$ is not bounded on $\G$.
    For example, let $T^n$ be a $n$-dimensional torus for $n \geq 2$.
    Let $i \colon S^1 \to T^n$ be the injection defined by $i(x) = (x, 0, \cdots, 0)$ and $\phi \colon \Homeo_+(S^1) \to \Homeo_0(T^n)$ the homomorphism defined by
    \[
      (\phi(f))(x_1, x_2, \cdots, x_n) = (f(x_1), x_2, \cdots, x_n),
    \]
    where $x \in S^1$, $f \in \Homeo_+(S^1)$ and $(x_1, \cdots, x_n) \in T^n$.
    If $a \in H^1(T^n;\ZZ)$ is a class satisfying $i^*a \neq 0$, the pullback $\phi^*[\mathfrak{G}_{x, \alpha}]$ is equal to the Euler class of $\Homeo_+(S^1)$ up to non-zero constant multiple.
    In particular, the class $[\mathfrak{G}_{x, \alpha}]$ is non-zero on $\Homeo_0(T^n)$.
    Because any non-zero cohomology class in $H_{\grp}^2(\Homeo_0(T^n);\ZZ)$ is unbounded \cite{MR}, the cocycle $\mathfrak{G}_{x, \alpha}$ is not bounded for any $x$ and $\alpha$.
  \end{enumerate}
\end{remark}

\begin{proof}[Proof of Theorem \textup{\ref{thm_appendix}}]
  Let us consider the following commutative diagram
  \[
  \xymatrix{
  0 \ar[r] & \A \ar[r] & \hG \ar[r] & \G \ar[r] & 1 \\
  0 \ar[r] & \A \ar[r] \ar@{=}[u] & \phi^*\hG \ar[r] \ar[u]^-{\tphi} & K \ar[r] \ar[u]^-{\phi} & 1,
  }
  \]
  where $\phi^*\hG$ is the pullback of $\hG$.
  By assumption, either the local translation number $\trot_{x, \alpha}$ or the mean translation number $\trot_{\mu, \alpha}$ defines a homomorphism $\phi^*\hG \to \RR$ whose restriction to $\A$ is equal to the inclusion map $\A \to \RR$.
  Hence the Euler class $e(\phi^*\hG)$ is trivial in $\HHH_{\grp}^2(K;\RR)$ by Remark \ref{rem:app_hom_ext_trivial}.
  On the other hand, the class $e(\phi^*\hG)$ is equal to the pullback $\phi^* (e(\hG))$.
  Hence the class $\phi^* (e(\hG))$ is equal to zero.
\end{proof}

\begin{proof}[Proof of Theorem $\ref{thm:app_meas_pres}$]
  Let $\alpha$ be a standard representative of $a$.
  By Remark \ref{rem:existence_trans_numbers}, the mean translation number $\trot_{\mu, \alpha}$ is defined on the preimage $p^{-1}(\Gm)$.
  By Proposition \ref{prop:trans_number_hom}, $\trot_{\mu, \alpha}$ is a homomorphism on $p^{-1}(\Gm)$.
  Since $p^{-1}(\phi(K)) \subset p^{-1}(\Gm)$, Theorem \ref{thm_appendix} implies Theorem \ref{thm:app_meas_pres}.
\end{proof}

\begin{remark}\label{rem:integral_not_vanish}
  In general, Theorems \ref{thm_appendix} and \ref{thm:app_meas_pres} do not hold in $H_{\grp}^2(\Gm;\ZZ)$.
  Indeed, if $X = S^1$ and $a \in H^1(X;\ZZ)$ is the generator, the class $[\mathfrak{G}_{x, \alpha_{\ZZ}}] \in H_{\grp}^2(\Homeo_+(S^1);\ZZ)$ is equal to the Euler class of $\Homeo_+(S^1)$, which is non-zero on the Lebesgue measure preserving subgroup.
  For example, it is known that the Euler class with coefficients in $\ZZ$ is non-zero on the subgroup $SO(2)$ of $\Homeo_+(S^1)$, which preserves the Lebesgue measure.
  \end{remark}

\subsection{Cohomological non-triviality of the Gal--K\k{e}dra cocycle}
Concerning the non-triviality of the cohomology class $[\mathfrak{G}_{x, \alpha}]$, Gal--K\k{e}dra showed in \cite{1105.0825} the following:

\begin{theorem}[{\cite[Theorem 1.9]{1105.0825}}]\label{thm:app_GK_non_zero}
  Let $K$ be a connected topological group and $\phi \colon K \to \G$ be a homomorphism.
  Assume that the canonical homomorphism $\HHH^2(BK;\A) \to \HHH^2(BK^{\delta};\A)$ is injective, where $BK$ and $BK^{\delta}$ are the classifying spaces.
  Then the class $\phi^*[\mathfrak{G}_{x, \alpha}]$ is non-zero if and only if $\ev_x^*a \in \HHH_{\grp}^1(K;\A)$ is non-zero.
  Here $\ev_x \colon K \to X$ is the map defined by $\ev_x(k) = (\phi(k))(x) \in X$.
\end{theorem}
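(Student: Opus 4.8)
The plan is to pass from the cocycle to the Euler class of a central extension, reinterpret its (non)vanishing as a bundle‑extension obstruction, and then compute that obstruction by transgression. By Theorem~\ref{thm:app_gal_kedra_euler} we have $\phi^*[\mathfrak{G}_{x, \alpha}] = \phi^*(e(\hG))$, and, exactly as in the proof of Theorem~\ref{thm_appendix}, $\phi^*(e(\hG)) = e(\phi^*\hG)$, where $0 \to \A \to \phi^*\hG \to K \to 1$ is the pullback of the central extension~(\ref{cent_ext_hG}) along $\phi$. By Remark~\ref{rem:app_hom_ext_trivial} (applied with $f = \id_{\A}$), this class vanishes if and only if $\phi^*\hG$ splits as an abstract group extension, i.e.\ if and only if $\phi \colon K \to \G$ lifts to a homomorphism $K \to \hG$ on the underlying discrete group. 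So it suffices to prove that such a lift exists if and only if $\ev_x^* a = 0$.

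First I would reinterpret the lifting problem geometrically. A homomorphic lift $K \to \hG$ of $\phi$ is precisely an action of $K$ on $\hX$ by bundle automorphisms covering the given action on $X$, that is, a $K$-equivariant structure on the principal $\A$-bundle $\pi \colon \hX \to X$; this in turn is the same as an extension of $\hX$ to a principal $\A$-bundle over the homotopy quotient $EK^{\delta}\times_{K^{\delta}} X$. The equivariant evaluation map $\hg \mapsto \hg(\hx)$, which is $\A$-equivariant and covers $\ev_x \circ p$, makes the identification of $\phi^*\hG$ with the pullback bundle $\ev_x^*\hX$ explicit. By the Serre spectral sequence of the Borel fibration $X \to EK^{\delta}\times_{K^{\delta}} X \to BK^{\delta}$ (whose relevant diagonal entries carry untwisted $\A$-coefficients, since $X$ is path-connected and $\phi(K)$ fixes $a$), such an extension exists if and only if the transgression $d_2^{\delta}(a) \in \HHH^2(BK^{\delta};\A) = \HHH^2_{\grp}(K;\A)$ vanishes. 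Hence $e(\phi^*\hG) = 0$ if and only if $d_2^{\delta}(a) = 0$.

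Next I would transport this obstruction to the topological classifying space. The canonical map $BK^{\delta} \to BK$ induces a morphism of Borel fibrations which is the identity on the fiber $X$, so naturality of transgression gives $d_2^{\delta}(a) = j\bigl(d_2^{\mathrm{top}}(a)\bigr)$, where $d_2^{\mathrm{top}}(a) \in \HHH^2(BK;\A)$ is the transgression in $X \to EK\times_K X \to BK$ and $j \colon \HHH^2(BK;\A) \to \HHH^2(BK^{\delta};\A)$ is the canonical homomorphism. By the injectivity hypothesis on $j$, we obtain $d_2^{\delta}(a) = 0$ if and only if $d_2^{\mathrm{top}}(a) = 0$. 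Finally, the fibration morphism $EK \to EK\times_K X$, $e \mapsto [e,x]$, covers $\id_{BK}$ and restricts to the orbit map $\ev_x$ on fibers, so naturality of transgression yields $d_2^{\mathrm{top}}(a) = \tau(\ev_x^* a)$, where $\tau \colon \HHH^1(K;\A) \to \HHH^2(BK;\A)$ is the transgression in the universal bundle $K \to EK \to BK$. Since $EK$ is contractible and $K$ is connected (so $BK$ is simply connected), $\tau$ is injective, whence $d_2^{\mathrm{top}}(a) = 0$ if and only if $\ev_x^* a = 0$. Chaining the three equivalences proves that $\phi^*[\mathfrak{G}_{x, \alpha}] \neq 0$ if and only if $\ev_x^* a \neq 0$.

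The step I expect to be the main obstacle is the second one: justifying rigorously that the group-theoretic splitting obstruction $e(\phi^*\hG)$ coincides with the bundle-extension obstruction $d_2^{\delta}(a)$. This requires matching the descent data of a $K$-equivariant $\A$-bundle with a genuine homomorphic lift $K \to \hG$ (not merely a set-theoretic one), and keeping careful track of the topologies on $\G$ and $\hG$ so that the equivariant evaluation map and the morphisms of Borel fibrations above are continuous and the spectral-sequence coefficient systems are indeed untwisted on the entries used. The injectivity hypothesis on $\HHH^2(BK;\A) \to \HHH^2(BK^{\delta};\A)$ is exactly the bridge between the discrete group-cohomological computation and the topological classifying-space computation, and it is indispensable: without it, vanishing of the transgression $d_2^{\mathrm{top}}(a)$ (equivalently, of $\ev_x^* a$) could fail to be detected by the discrete Euler class.
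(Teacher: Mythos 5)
The paper contains no proof of this statement: it is quoted directly from Gal--K\k{e}dra \cite[Theorem 1.9]{1105.0825} and then used as a black box in the proof of Theorem \ref{thm:app_seifert}, so there is no in-paper argument to compare yours against, and I assess your proposal on its own merits.

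Your outline is correct, and it is consistent with the machinery the paper does develop. Step 1 is exactly the paper's own mechanism: $\phi^*[\mathfrak{G}_{x,\alpha}] = \phi^*e(\hG) = e(\phi^*\hG)$ by Theorem \ref{thm:app_gal_kedra_euler} and the pullback diagram in the proof of Theorem \ref{thm_appendix}, and vanishing of $e(\phi^*\hG)$ in $\HHH_{\grp}^2(K;\A)$ is equivalent to an abstract homomorphic lift $K \to \hG$ of $\phi$ by Remark \ref{rem:app_hom_ext_trivial}. Steps 3 and 4 are routine naturality-of-transgression arguments, and you invoke the hypotheses at exactly the right places: connectedness of $K$ keeps the coefficient systems untwisted and makes $\tau \colon \HHH^1(K;\A) \to \HHH^2(BK;\A)$ injective (for discrete $\A$ it is in fact an isomorphism), while injectivity of $\HHH^2(BK;\A) \to \HHH^2(BK^{\delta};\A)$ is what lets the topological computation detect the discrete class. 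Your step 2 is indeed the only place needing real work, and the ingredient that closes it is one you did not name: the kernel of $p \colon \hG \to \G$ is precisely $\{T_r : r \in \A\}$, because the fibers of $\hX$ are discrete and $X$ is connected. Hence, given a principal $\A$-bundle over $EK^{\delta}\times_{K^{\delta}}X$ restricting to $\hX$ on the fiber, the induced action data $\Psi_k(e)$ (a bundle automorphism covering $\phi(k)$, a priori depending on $e \in EK^{\delta}$) satisfies that $e \mapsto \Psi_k(e)(\hx)$ is a continuous map into a discrete fiber, hence locally constant, hence constant on the connected space $EK^{\delta}$; the cocycle identity then forces $k \mapsto \Psi_k$ to be an honest homomorphism, so a bundle extension really does yield a group-theoretic lift. (Alternatively, one can apply the five-term sequence to $1 \to \pi_1(X) \to \pi_1(EK^{\delta}\times_{K^{\delta}}X) \to K \to 1$ and identify the pushout of this extension along $a$ with $\phi^*\hG$; your observation that $\phi^*\hG \cong \ev_x^*\hX$ is the germ of an even shorter route, since it exhibits $\phi^*\hG \to K$ as an $\A$-covering of the topological group $K$ with class $\ev_x^*a$.) One small reading issue: the statement's ``$\ev_x^*a \in \HHH_{\grp}^1(K;\A)$'' should be understood as the singular cohomology $\HHH^1(K;\A)$ of the topological group $K$, which is how your proof (and the paper's application in Theorem \ref{thm:app_seifert}) treats it; with that reading, your chain of equivalences proves the theorem.
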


By using this, we show that the Gal--K\k{e}dra cocycle for several Seifert-fibered $3$-manifolds is cohomologically non-trivial (we refer to \cite{MR741334} for basics of Seifert-fibered $3$-manifolds).

\begin{theorem}\label{thm:app_seifert}
  Let $X$ be a closed Seifert-fibered $3$-manifold whose Euler number is equal to zero.
  If $X$ is not covered by $S^3$, there exists a cohomology class $a \in \HHH^1(X;\A)$ such that the cohomology class $[\mathfrak{G}_{x, \alpha}] \in \HHH_{\grp}^2(\G;\A)$ is non-zero.
\end{theorem}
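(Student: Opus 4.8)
The goal is to exhibit, for a closed Seifert-fibered $3$-manifold $X$ with zero Euler number and not covered by $S^3$, a class $a \in \HHH^1(X;\A)$ making $[\mathfrak{G}_{x,\alpha}]$ nonzero. By Theorem \ref{thm:app_GK_non_zero}, this reduces to finding a connected topological group $K$, a homomorphism $\phi \colon K \to \G$, and a class $a$ such that (i) $\HHH^2(BK;\A) \to \HHH^2(BK^\delta;\A)$ is injective and (ii) $\ev_x^*a \in \HHH^1_{\grp}(K;\A)$ is nonzero. The natural candidate for $K$ is the circle $S^1$ acting on $X$ through its Seifert structure.

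The plan is as follows. First I would recall that a Seifert fibration endows $X$ with an effective $S^1$-action whose orbits are the Seifert fibers; taking $K = S^1$ and $\phi \colon S^1 \to \Homeo(X)$ the action, I need this action to preserve some $a \in \HHH^1(X;\A)$, i.e. to land in $\G = \Homeo(X,a)$. Since $S^1$ is connected and acts isotopically to the identity, it acts trivially on $\HHH^1(X;\A)$, so \emph{every} class $a$ is preserved and $\phi(S^1) \subset \G$ for any choice of $a$. Next, for condition (i), the injectivity of $\HHH^2(BS^1;\A) \to \HHH^2(BS^1_\delta;\A)$ is a standard fact: $\HHH^2(BS^1;\ZZ) \cong \ZZ$ is detected in $\HHH^2(B(S^1)^\delta;\ZZ)$ (this is essentially the statement that the Euler class of the universal flat-versus-topological circle bundle is nontrivial, equivalently that $S^1$ is a perfect-enough group for the relevant comparison to be injective), and the analogous statement for $\A = \RR^\delta$ or $\ZZ$ follows by change of coefficients.

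The crux is condition (ii): I must produce $a \in \HHH^1(X;\A)$ with $\ev_x^*a \neq 0$ in $\HHH^1_{\grp}(S^1;\A) \cong \Hom(\pi_1(S^1),\A) = \Hom(\ZZ,\A)$. The evaluation map $\ev_x \colon S^1 \to X$ sends the circle to the Seifert fiber through $x$, so $\ev_x^*a$ is nonzero exactly when $a$ pairs nontrivially with the homology class of a generic fiber $[F] \in \HHH_1(X;\ZZ)$. Thus the whole theorem hinges on showing that, under the hypotheses (Euler number zero, not covered by $S^3$), the regular fiber class $[F]$ has infinite order in $\HHH_1(X;\ZZ)$ — for then I can choose $a$ dual to it (with $\A = \ZZ$, say, via a homomorphism $\pi_1(X) \to \ZZ$ nonvanishing on $[F]$). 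The hypothesis "not covered by $S^3$'' rules out the spherical geometries where $\pi_1$ is finite and $[F]$ is torsion; the Euler number zero hypothesis is what keeps the fiber from being a torsion (or null-homotopic) class after filling.

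\begin{proof}[Proof sketch of Theorem \textup{\ref{thm:app_seifert}}]
  Let $\phi \colon S^1 \to \Homeo(X)$ be the effective circle action given by the Seifert structure, so $\phi(S^1)$ consists of homeomorphisms isotopic to the identity; in particular $\phi(S^1)$ fixes every class in $\HHH^1(X;\A)$ and hence $\phi(S^1) \subset \G$ for any $a$. Take $K = S^1$. The comparison map $\HHH^2(BS^1;\A) \to \HHH^2(B(S^1)^\delta;\A)$ is injective, so by Theorem \ref{thm:app_GK_non_zero} it remains to choose $a$ with $\ev_x^*a \neq 0$.

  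For a basepoint $x$ on a regular fiber, $\ev_x \colon S^1 \to X$ parametrizes that fiber, and under $\HHH^1_{\grp}(S^1;\A) \cong \Hom(\pi_1(S^1),\A)$ the class $\ev_x^*a$ is identified with the homomorphism $\pi_1(S^1) = \ZZ \to \A$ given by pairing $a$ with the regular fiber class $[F] \in \HHH_1(X;\ZZ)$. Since the Euler number of $X$ vanishes and $X$ is not covered by $S^3$, the fundamental group $\pi_1(X)$ is infinite and the regular fiber represents a class of infinite order in $\HHH_1(X;\ZZ)$; hence there is a homomorphism $a \colon \pi_1(X) \to \ZZ \hookrightarrow \A$ that is nonzero on $[F]$. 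For this $a$ we have $\ev_x^*a \neq 0$, and Theorem \ref{thm:app_GK_non_zero} gives $\phi^*[\mathfrak{G}_{x,\alpha}] \neq 0$, whence $[\mathfrak{G}_{x,\alpha}] \neq 0$.
\end{proof}

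The main obstacle I anticipate is the homological input in the last paragraph: verifying that the regular fiber class $[F]$ has infinite order precisely under the stated hypotheses. This requires the structure theory of Seifert fibrations, where $[F]$ is torsion in $\HHH_1$ iff the base orbifold and the Seifert invariants force a relation of the form $e \cdot [F] = 0$ with $e$ related to the Euler number; the zero-Euler-number hypothesis ensures no such relation collapses $[F]$ to a torsion element, while excluding $S^3$-covers removes the finite-$\pi_1$ cases where all of $\HHH_1$ is torsion.
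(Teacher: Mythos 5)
Your overall strategy is the same as the paper's in its geometric core (apply Theorem \ref{thm:app_GK_non_zero} to the Seifert circle action and find a class $a$ pairing nontrivially with the regular fiber, which exists because the Euler number vanishes), but your choice $K = S^1$ introduces a genuine gap. The paper instead takes $K = \Homeo_0(X)$, for which Thurston's theorem \cite{thurston74} gives that $\HHH^2(BK;\A) \to \HHH^2(BK^{\delta};\A)$ is an isomorphism for \emph{arbitrary} coefficients, and only afterwards restricts along the circle action $\iota \colon S^1 \to K$ to check $\iota^*\ev_x^*a \neq 0$. Your claim that the injectivity hypothesis for $K = S^1$ holds for $\A = \RR^{\delta}$ ``by change of coefficients'' is false. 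The group $\HHH^2(BS^1;\RR) \cong \RR$ is generated by the real Euler class, and its image in $\HHH_{\grp}^2((S^1)^{\delta};\RR)$ is the class obtained from the central extension $0 \to \ZZ \to \RR^{\delta} \to (S^1)^{\delta} \to 0$ by pushing forward along the inclusion $f \colon \ZZ \hookrightarrow \RR$; by Remark \ref{rem:app_hom_ext_trivial} this class is trivial, because $f$ extends to the homomorphism $\id \colon \RR^{\delta} \to \RR$. So for real coefficients the comparison map is zero, not injective, and Theorem \ref{thm:app_GK_non_zero} cannot be invoked with $K = S^1$. Since $\A$ stands for either $\ZZ$ or $\RR^{\delta}$ throughout the paper, and since integral nonvanishing of $[\mathfrak{G}_{x,\alpha}]$ does not imply real nonvanishing (cf.\ Remark \ref{rem:integral_not_vanish}), your argument establishes only the $\A = \ZZ$ case. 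Even there your justification of injectivity is too vague as written, though the fact is true: it follows by restricting to the finite cyclic subgroups $\ZZ/n \subset S^1$, on which the Euler class restricts to a generator of $\HHH_{\grp}^2(\ZZ/n;\ZZ) \cong \ZZ/n$ for every $n$.

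The second ingredient, which you flag yourself but do not prove, is that the regular fiber class has infinite order in $\HHH_1(X;\ZZ)$ under the stated hypotheses. This is correct, and the paper proves it constructively rather than by citation: writing the Seifert presentation of $\pi_1(X)$ with regular fiber $h$, the hypothesis that $X$ is not covered by $S^3$ guarantees $h$ has infinite order in $\pi_1(X)$ (hence all $\alpha_j \neq 0$), and one defines an explicit homomorphism $\pi_1(X) \to \A$ killing the $a_i, b_i$, sending $q_j$ to $\pm\alpha\beta_j/\alpha_j$ and $h$ to $\alpha = \alpha_1\cdots\alpha_n \neq 0$; the long relation is respected precisely because the Euler number is zero. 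Filling in these two points would complete your proof for $\A = \ZZ$; to obtain the full statement, the clean repair is the paper's route, i.e., apply Theorem \ref{thm:app_GK_non_zero} with $K = \Homeo_0(X)$ (where Thurston's theorem supplies injectivity for both coefficient groups) and use the circle action only to detect $\ev_x^*a$.
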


\begin{proof}
  Let $K$ be the identity component $\Homeo_0(X)$ of the homeomorphism group $\Homeo(X)$ and $\phi \colon K \to \G$ the inclusion.
  Thurston's theorem \cite{thurston74} asserts that the map
  \[
    B\iota^* \colon \HHH^*(BK;\A) \to \HHH^*(BK^{\delta};\A)
  \]
  is an isomorphism.
  Hence, by Theorem \ref{thm:app_GK_non_zero}, it suffices to construct a cohomology class $a \in \HHH^1(X;\A)$ such that $\ev_x^*a \in \HHH^1(K;\A)$ is non-zero for some point $x \in X$.
  Since $X$ is a Seifert-fibered $3$-manifold, $X$ is equipped with the circle action
  \[
    \iota \colon S^1 \to K
  \]
  whose orbits are the fibers of $X$.
  Hence, we now construct a cohomology class $a \in \HHH^1(K;\A)$ which is non-zero on a fiber of $X$, that is, $\iota^* \ev_x^*a \neq 0$.

  Let $(g;(\alpha_1, \beta_1), \dots, (\alpha_n, \beta_n))$ be a Seifert invariant of $X$.
  If $g\geq 0$, a presentation of the fundamental group $\pi_1(X)$ is given by
  \begin{align*}
    \langle a_i, b_i, q_j, h \mid [h, a_i] = [h, b_i] = [h, q_j] = 1, \ q_j^{\alpha_j}\HHH^{\beta_j} = 1, \ q_1\dots q_n [a_1, b_1]\dots [a_g, b_g] = 1\rangle.
  \end{align*}
  Here $h$ corresponds to a regular fiber of $X$ and $1$ denotes the unit element of $\pi_1(X)$.
  Note that $h$ is an element of infinite order since $X$ is not covered by $S^3$ (see \cite[Lemma 3.2]{MR705527} for example).
  Hence $\alpha_j$ is non-zero for any $j$.
  We set $\alpha = \alpha_1 \cdots \alpha_n$ and define a map $\phi \colon \pi_1(X) \to \A$ by
  \[
    \phi(a_i) = \phi(b_i) = 0, \ \  \phi(q_j) = \frac{\alpha \cdot \beta_j}{\alpha_j}, \ \  \phi(h) = \alpha.
  \]
  This map is a well-defined homomorphism since the Euler number of $X$ is equal to zero.
  Since $\phi(h) = \alpha \neq 0$, the homomorphism $\phi$ defines a desired cohomology class $a \in \HHH^1(X;\A)$.

  The proof for the case where $g < 0$ is similar.
\end{proof}

\section{Undistortion elements in the group of bundle automorphisms}\label{sec:appl2}

In this section, we assume that $S_a$ is non-empty.
We apply the translation numbers to study distortion in the group $\hG$.

\subsection{Distortion in groups}\label{subsec:distortion}
Let us recall distortion in groups.
Let $\GG$ be a finitely generated group and $S$ a finite (symmetric) generating set of $\GG$.
For any element $\gg \in \GG$, its \textit{word norm} $| \gg |_S$ is defined by
\[
  | \gg |_S = \min \{ l \colon \gg = s_1 \cdots s_l \text{ for some } s_j \in S \},
\]
and its \textit{translation length} $\tau(\gg)$ is given as
\[
  \tau(\gg) = \lim_{n \to \infty} \frac{| \gg^n |_{S}}{n}.
\]
An element $\gg$ is called \textit{distorted} in $\GG$ if $\tau(\gg) = 0$ and \textit{undistorted} otherwise.
In contrast to the word norm and the translation length, distortion and undistortion do not depend on the choice of generators.

For an arbitrary group $G$, an element $g \in G$ is \textit{undistorted} in $G$ if it is undistorted in every finitely generated subgroup of $G$.

\subsection{A seminorm on $\hG$}\label{subsec:seminorm}

From here to the end of the next subsection, we fix a standard cocycle $\alpha$ of $a$.
By the definition of standard cocycles, the value $\rho_{x, \alpha}(\hg)$ depends continuously on $x$.
Hence, the set $\{ \rho_{x, \alpha}(\hg) \colon x \in X \}$ is bounded when $X$ is compact.
We set
\begin{align}\label{seminorm}
  \| \hg \|_{\alpha} = \sup_{x \in X} \left| \rho_{x, \alpha}(\hg) \right|.
\end{align}

\begin{lemma}
  If $X$ is compact, then $\| \cdot \|_{\alpha}$ is a seminorm on the group $\hG$.
\end{lemma}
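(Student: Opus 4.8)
The plan is to verify the three defining properties of a group seminorm for $\| \cdot \|_{\alpha}$: that it vanishes on the identity, that it is invariant under taking inverses, and that it is subadditive (nonnegativity being automatic, since the quantity is a supremum of absolute values). Finiteness of $\| \hg \|_{\alpha}$ for every $\hg$ is already guaranteed by the continuity of $x \mapsto \rho_{x, \alpha}(\hg)$ together with the compactness of $X$, as recorded just before the statement. The single algebraic input I will use throughout is the cocycle-type identity
\[
  \rho_{x, \alpha}(\hg \hh) = \rho_{h(x), \alpha}(\hg) + \rho_{x, \alpha}(\hh)
\]
established in the proof of Proposition \ref{prop:trans_number_hom}, where $g = p(\hg)$ and $h = p(\hh)$.

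For the identity element, note that $\rho_{x, \alpha}(\id_{\hX}) = \theta(\hx) - \theta(\hx) = 0$ for every $x$, so $\| \id_{\hX} \|_{\alpha} = 0$. Subadditivity then follows at once from the displayed identity: taking absolute values gives $|\rho_{x, \alpha}(\hg \hh)| \le |\rho_{h(x), \alpha}(\hg)| + |\rho_{x, \alpha}(\hh)| \le \| \hg \|_{\alpha} + \| \hh \|_{\alpha}$ for each $x$, and taking the supremum over $x \in X$ yields $\| \hg \hh \|_{\alpha} \le \| \hg \|_{\alpha} + \| \hh \|_{\alpha}$.

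The one step that needs a little care is invariance under inverses. Setting $\hh = \hg^{-1}$ in the identity (so that $h = g^{-1}$ and $\hg \hh = \id_{\hX}$) gives $0 = \rho_{g^{-1}(x), \alpha}(\hg) + \rho_{x, \alpha}(\hg^{-1})$, hence $\rho_{x, \alpha}(\hg^{-1}) = -\rho_{g^{-1}(x), \alpha}(\hg)$. Here I will use that $g$ is a homeomorphism of $X$, so that $g^{-1}$ is a bijection and $g^{-1}(x)$ ranges over all of $X$ as $x$ does; taking absolute values and then the supremum therefore gives $\| \hg^{-1} \|_{\alpha} = \sup_{x} |\rho_{g^{-1}(x), \alpha}(\hg)| = \sup_{y} |\rho_{y, \alpha}(\hg)| = \| \hg \|_{\alpha}$. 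This exhausts the three properties. I do not expect a genuine obstacle: the content is entirely packaged in the identity from Proposition \ref{prop:trans_number_hom}, and the only non-formal ingredients are the compactness-plus-continuity argument ensuring finiteness and the surjectivity of $g$ used in the inverse-invariance step.
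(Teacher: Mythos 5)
Your proof is correct and follows essentially the same route as the paper's: both rest on the identity $\rho_{x, \alpha}(\hg\hh) = \rho_{h(x), \alpha}(\hg) + \rho_{x, \alpha}(\hh)$ and the resulting formula $\rho_{x, \alpha}(\hg^{-1}) = -\rho_{g^{-1}(x), \alpha}(\hg)$, followed by taking suprema. The only differences are that you spell out details the paper leaves implicit (deriving the inverse formula from the cocycle identity, the vanishing on the identity, and the change of variables $y = g^{-1}(x)$ in the supremum), which is fine.
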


\begin{proof}
  For any $\hg \in \hG$, we have $\rho_{x, \alpha}(\hg^{-1}) = -\rho_{g^{-1}(x), \theta}(\hg)$, where $g = p(\hg) \in \G$.
  Hence the equality $\| \hg^{-1} \|_{\alpha} = \| \hg \|_{\alpha}$ follows.

  For $\hg, \hh \in \hG$, we set $g = p(\hg)$ and $h \in p(\hh)$.
  Then we obtain
  \begin{align*}
    \rho_{x, \alpha}(\hg\hh) = \rho_{h(x), \theta}(\hg) + \rho_{x, \alpha}(\hh),
  \end{align*}
  and this implies the inequality $\| \hg \hh \|_{\alpha} \leq \| \hg \|_{\alpha} + \| \hh \|_{\alpha}$.
\end{proof}

\subsection{Undistortion in $\hG$}
Let $\GG$ be a finitely generated subgroup of $\hG$ and $S$ a finite generating set.
Let $| \cdot |_{S}$ be the word norm with respect to $S$.
It is easily verified that an inequality
\begin{align}
  C \cdot | \hg |_{S} \geq \| \hg \|_{\alpha}
\end{align}
holds for any $\hg \in \GG$, where $C = \max \{ \| s \|_{\alpha} \colon s \in S \}$.

\begin{proof}[Proof of Theorem $\ref{thm:distortion}$]
  Note that the inequality $|\trot_{\mu, \alpha}(\hg)| > 0$ implies the existence of a point $x \in X$ with $|\trot_{x, \alpha}(\hg)| > 0$ by Remark \ref{rem:mean_t} (1).
  Hence it suffices to show that $\hg$ is undistorted in $\hG$ when $|\trot_{x, \alpha}(\hg)| > 0$.
  Let $\GG$ be a finitely generated subgroup of $\hG$ that contains $\hg$ and $S$ a finite generating set of $\GG$.
  Then we have
  \begin{align*}
    \frac{C \cdot | \hg^n |_S}{n} \geq \frac{\| \hg^n \|_{\alpha}}{n} \geq \frac{|\rho_{x, \alpha}(\hg^n)|}{n}.
  \end{align*}
  Hence we obtain
  \begin{align*}
    \tau(\hg) \geq \frac{1}{C}\cdot |\trot_{x, \alpha}(\hg)| > 0 
  \end{align*}
  by assumption.
  Since $\GG$ is arbitrary, the element $\hg$ is undistorted in $\hG$.
\end{proof}

\begin{corollary}\label{cor:undistorted}
  Let $X$ be a compact space, $\hg$ an element of $\hG$, and set $g = p(\hg) \in \G$.
  Assume that $g$ has a periodic point $x \in X$ of period $q$.
  If there exists a lift $\hx \in \hX$ of $x$ such that $\hg^q(\hx) \neq \hx$, then $\hg$ is undistorted in $\hG$.
  In particular, the element $T_r$ is undistorted in $\hG$ for any non-zero number $r \in \A$.
\end{corollary}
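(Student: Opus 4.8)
Prove Corollary \ref{cor:undistorted}: if $g = p(\hg)$ has a periodic point $x$ of period $q$ and some lift $\hx$ satisfies $\hg^q(\hx) \neq \hx$, then $\hg$ is undistorted in $\hG$. The final sentence (that $T_r$ is undistorted for nonzero $r$) should follow as a special case.

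**Approach.** The natural strategy is to reduce the corollary to Theorem \ref{thm:distortion} by showing that the hypothesis forces a nonzero local translation number $\trot_{x,\alpha}(\hg)$. First I would recall that by Lemma \ref{lem:theta} the fiber over $x$ is the $\A$-orbit $\{T_r(\hx)\}_{r\in\A}$, so the condition $\hg^q(\hx)\neq\hx$ should be recast as the statement that $\hg^q(\hx) = T_n(\hx)$ for some \emph{nonzero} $n \in \A$. Since $x$ is $q$-periodic, $\hg^q$ preserves the fiber over $x$, hence $\hg^q(\hx)$ lies in that fiber and equals $T_n(\hx)$ for a unique $n \in \A$; the hypothesis $\hg^q(\hx)\neq\hx$ is exactly $n \neq 0$.

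**Main step.** Now I would invoke Proposition \ref{prop:periodic}(1), whose proof computes $\trot_{x,\alpha}(\hg) = n/q$ precisely under the setup $\hg^q(\hx) = T_n(\hx)$. (This requires $\A = \ZZ$ for the exact statement of Proposition \ref{prop:periodic}, but the computation there—using $\theta(T_n(\hx)) = \theta(\hx)+n$ from Lemma \ref{lem:theta} and the fact that $\rho_{x,\alpha}$ is a quasimorphism on $\langle\hg\rangle$ so the limit exists—works verbatim for $\A = \RR^\delta$ as well.) Since $n \neq 0$ and $q \geq 1$, we get $\trot_{x,\alpha}(\hg) = n/q \neq 0$. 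Theorem \ref{thm:distortion} then immediately yields that $\hg$ is undistorted in $\hG$. For the special case, note that $T_r$ covers $g = \id_X$, every point $x$ is a fixed point (period $q=1$), and $\hg^1(\hx) = T_r(\hx) \neq \hx$ for $r \neq 0$, so the corollary applies.

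**Anticipated obstacle.** There is essentially no deep obstacle here; the corollary is a clean packaging of Proposition \ref{prop:periodic} and Theorem \ref{thm:distortion}. The only point demanding care is the bookkeeping identifying $\hg^q(\hx)\neq\hx$ with the non-vanishing of the integer $n$, and confirming that $\hg^q$ genuinely maps the fiber over $x$ to itself—this uses that $g^q(x) = x$ together with the intertwining relation $\pi\circ\hg = g\circ\pi$. One should also remark that the value $n$ (and hence the translation number) is independent of the chosen lift $\hx$: replacing $\hx$ by $T_s(\hx)$ replaces $\hg^q(\hx) = T_n(\hx)$ by $\hg^q(T_s(\hx)) = T_s(\hg^q(\hx)) = T_{n+s}(\hx) = T_n(T_s(\hx))$, using that $\hg$ commutes with all $T_r$ by \eqref{hG_def}; so $n$ is well defined and the hypothesis is lift-independent. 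With these observations in place the proof is a two-line deduction.
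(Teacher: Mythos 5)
Your proposal is correct and follows essentially the same route as the paper: identify $\hg^q(\hx) = T_r(\hx)$ for some non-zero $r \in \A$ (using that $\hg^q$ preserves the fiber over $x$), run the computation from Proposition \ref{prop:periodic} to get $\trot_{x,\alpha}(\hg) = r/q \neq 0$, and conclude by Theorem \ref{thm:distortion}. Your additional remarks---that the computation is valid for $\A = \RR^{\delta}$ as well as $\ZZ$, and that the hypothesis is independent of the chosen lift---are accurate points of bookkeeping that the paper leaves implicit.
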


\begin{proof}
  By assumption, the points $\hg^q(\hx)$ and $\hx$ belong to the same fiber, and hence there exits a non-zero number $r \in \A$ such that $\hg^q(\hx) = T_r(\hx)$.
  Then, by the calculation same as in the proof of Proposition \ref{prop:periodic}, we have
  \[
    \trot_{x, \alpha}(\hg) = \frac{r}{q} \neq 0.
  \]
  This, together with Theorem \ref{thm:distortion}, implies the corollary.
\end{proof}

\section*{Acknowledgement}
The author would like to thank Yoshifumi Matsuda for helpful discussions.
The author is supported by JSPS KAKENHI Grant Number JP21J11199.

\bibliographystyle{amsalpha}
\bibliography{references.bib}
\end{document}